\newtheorem{theorem}{Theorem}
\newtheorem{lemma}[theorem]{Lemma}
\newtheorem{corollary}[theorem]{Corollary}
\def\blem{\begin{lemma}}
\def\elem{\end{lemma}}
\def\bmat{\begin{pmatrix}}
\def\emat{\end{pmatrix}}
\def\ve{\varepsilon}
\def\ep{\varepsilon}
\def\R{\mathbb{R}}
\def\N{\mathbb{N}}
\def\ol{\overline}
\def\fr{\frac}
\def\mid{\,:\,} 
\def\gl{\lambda}
\def\IN{\text{ in }}
\def\AND{\text{ and }}
\def\FOR{\text{ for }}
\def\ON{\text{ on }}
\def\bproof{\begin{proof}}
\def\eproof{\end{proof}}
\def\stm{\setminus}
\def\pl{\partial}
\def\gd{\delta}
\def\tim{\times}
\def\bald{\begin{aligned}}
\def\eald{\end{aligned}} 
\def\FORALL{\text{ for all }}
\DeclareMathOperator\USC{USC}
\DeclareMathOperator\supp{supp}
\DeclareMathOperator\tr{tr}
\def\stm{\setminus}
\def\cS{\mathcal{S}}
\def\bye{\end{document}}
\def\by{\end{proof}\bye}
\def\fr{\frac} 
\def\ga{\alpha}     
\def\go{\omega}
\def\gep{\varepsilon}      
\def\ep{\gep}    
\def\mid{\,:\,}   
\def\gb{\beta} 
\def\gc{\gamma}
\def\gd{\delta}
\def\gz{\zeta} 
\def\gth{\theta}   
\def\gl{\lambda}
\def\gL{\Lambda}
\def\gs{\sigma}   
\def\tim{\times}
\def\ol{\overline}
\def\ul{\underline}           
\def\pl{\partial}
\def\bcases{\begin{cases}}
\def\ecases{\end{cases}}
\def\balns{\begin{align*}}
\def\ealns{\end{align*}}
\def\bald{\begin{aligned}}
\def\eald{\end{aligned}}
\def\beq{\begin{equation}}
\def\eeq{\end{equation}}
\def\bred{\begin{color}{red}} \def\ered{\end{color}}
\def\gO{\Omega} 
\def\1{\mathbf{1}}
\def\IN{\text{ in }}\def\IF{\text{ if }} \def\FOR{\text{ for }} 
\def\AND{\text{ and }}
\def\RED#1{\textcolor{red}{#1}}
\def\ON{\text{ on }}
\theoremstyle{definition}
\newtheorem{definition}{Definition}
\theoremstyle{plain}
\newtheorem{proposition}[definition]{Proposition}
\theoremstyle{remark}
\newtheorem{remark}[definition]{Remark}
\def\ga{\gamma}
\def\gau{\gamma_1}
\def\gad{\gamma_2}
\def\bet{\beta}
\def\bb{m} \def\cc{n}
\def\oblique{(O$_\ep$)\xspace}
\def\dirichlet{(D$_\ep$)\xspace}
\def\neumann{(N$_\ep$)\xspace}
\def\problem{(P$_\ep$)\xspace} 
\def\obliquezero{(O$_0$)\xspace}
\def\dirichletzero{(D$_0$)\xspace}
\def\neumannzero{(N$_0$)\xspace}
\def\problemzero{(P$_0$)\xspace}
\title[Thin domains with oblique boundary condition]{Fully nonlinear elliptic PDEs in thin domains with oblique boundary condition}
\author[I. Birindelli]{Isabeau Birindelli}
\address[I. Birindelli]{Dipartimento di Matematica Guido Castelnuovo, Sapienza 
Universit\`a di Roma, Piazzale Aldo Moro 5, Roma, Italy.}
\email{isabeau@mat.uniroma1.it}
\author[A. Briani]{Ariela Briani}
\address[A. Briani]{Institut Denis Poisson, 
Universit\'e de Tours, France.}
\email{ariela.briani@univ-tours.fr}
\author[H. Ishii]{Hitoshi Ishii
}
\address[H. Ishii]{Institute for Mathematics and Computer Science, Tsuda  University,
 2-1-1 Tsuda, Kodaira, Tokyo 187-8577 Japan.}
\email{hitoshi.ishii@waseda.jp}
\keywords{asymptotic behavior of solutions, thin domains}
\thanks{A. Briani was partially supported by l’Agence Nationale de la Recherche (ANR), project
ANR-22-CE40-0010 COSS; I. Birindelli was partially supported by project Leoni 2023 GNAMPA-INDAM and project  "At the Edge of Reaction-diffusion equations" Sapienza Università di Roma. H. {Ishii was partially supported by the JSPS KAKENHI Grant Nos. JP20K03688, JP23K20224, and JP23K20604.}} 
\subjclass[2020]{
35B40, 
35D40, 
35J25  	
49L25 
}
\begin{document}

\begin{abstract}
This note is the natural continuation of what was started in \cite{BBI} i.e. the 
extension to fully nonlinear operators of  the well known result on thin domains of  Hale and Raugel \cite{HR}. Here we consider oblique 
boundary condition,  and find some new phenomena, in particular the limit equations contain "new terms"  in the second, first and zeroth 
order terms which don't  have an equivalent in the Neumann case. 
\end{abstract}

\maketitle 

\tableofcontents
\allowdisplaybreaks

\def\L{\mathrm{L}} \def\T{\mathrm{T}}\def\B{\mathrm{B}}
\section{Introduction} 
Let $\gO_\ep$ be a cylindrical domain, which is thin in the direction of the axe i.e.
$$\gO_\ep=\{(x,y)\in\gO\tim\R\mid \ep g^-(x)<y<\ep g^+(x)\}$$
with  $\gO\subset\R^N$ a 
bounded open domain, $g^\pm\in C(\ol\gO,\R)$, and $g^-(x)<g^+(x)$. 
We think of  $\pl\gO_\ep$, as divided in three parts the top, the bottom and the lateral part:
\[\begin{gathered}
\pl_\T \gO_\ep=\{(x,y)\mid  x\in \ol\gO, y=\ep g^+(x)\},\qquad
\pl_\B\gO_\ep =\{(x,y)\mid x\in\ol\gO, y=\ep g^-(x)\}, \quad
\\ \pl_\L\gO_\ep=\{(x,y)\mid x\in\pl\gO, \ep g^-(x)\leq y\leq \ep g^+(x)\}. 
\end{gathered}\]

We consider the elliptic PDE problem with oblique boundary conditions on the top and bottom of the domain:
\beq\label{eq1.1}
\left\{
\bald
F(D^2u^\ep,Du^\ep,u^\ep,x,y)=0 \ \IN\gO_\ep, &\\
\gamma^+\cdot Du^\ep=\beta^+ \ \ \ON\pl_\T\gO_\ep,  &\ \ \ \gamma^-\cdot Du^\ep=\beta^- \ \ \ON \pl_\B\gO_\ep,\\
B(Du^\ep,u^\ep,x)=0 \ON\pl_\L\gO_\ep,
\eald
\right.
\eeq
where $\gamma^\pm\in C(\ol\gO\tim[-1,1],\R^{N+1})$ and $\beta^\pm \in C(\ol\gO\tim[-1,1],\R)$. 

Our purpose in this paper is to investigate the asymptotic behaviour of $u^\ep$ when the positive parameter $\ep$ is sent to zero. In particular we want to determine the equation satisfied by the limit function in $\Omega$. This paper is the natural continuation of \cite{BBI}. In that work we had generalized to a non variational setting the famous result of Hale and Raugel \cite{HR} using the so called test function approach \`a la Evans. We shall see here that, choosing the oblique boundary condition, completely changes the limit equation even in the case of the Laplacian, introducing new first and second order terms.

On the other hand, the choice of the boundary condition on the lateral boundary 
$\pl_\L\gO_\ep$ does not play any crucial role. Hence we shall prove the results with either Neumann, or oblique or Dirichlet boundary condition on the lateral boundary i.e. either $B(Du^\ep,u^\ep,x):=\gamma\cdot Du^\ep-\beta(x)$ or $B(u^\ep):=u^\ep-\beta(x)$. For the clarity of the discussion, in the introduction and in the formal expansion of the next section, we concentrate mainly on the standard Neumann condition:
\beq\label{eq1.3}
B(Du^\ep,u^\ep,x):=\nu \cdot D_xu^\ep =0 \ \ \ON \pl_\L\gO_\ep,
\eeq 
where $\nu=\nu(x)$ denotes the outward unit normal vector at $x\in \pl\gO$. Notice that
$\nu_\ep(x,y)=(\nu(x),0)\in\R^{N+1}$ is the outward unit  normal vector at $(x,y)\in\pl_\L\gO_\ep$.

Regarding the obliqueness of $\gamma^\pm$, we assume that, when we write $\gamma^\pm=(\gamma_1^\pm,\gamma_2^\pm)$, with 
$\gamma_1^\pm\in C(\ol\gO\tim[-1,1],\R^N)$ and $\gamma_2^\pm\in C(\ol\gO\tim[-1,1],\R)$,  
\beq\label{eq1.2+1}
\pm\gamma_2^\pm >0 \ \ \ON \ol\gO\tim[-1,1]. 
\eeq

Hence, without loss of generality, we can suppose that 
\beq\label{eq1.2+2}
\gamma_2^+=1\quad\AND\quad \gamma_2^-=-1.
\eeq
So we will consider the oblique boundary condition
\beq\label{eq1.2}
\gamma_1^+\cdot D_xu^\ep+u^\ep_y=\beta^+ \ \ \ON\pl_\T\gO_\ep 
\quad \AND\quad
\gamma_1^-\cdot D_xu^\ep-u^\ep_y=\beta^- \ \ \ON \pl_\B\gO_\ep.
\eeq 
If $g^\pm\in C^1(\ol\gO,\R)$, $\nu_\ep$ denotes the outward unit normal vector 
on $\pl_\T\gO_\ep\cup \pl_\B\gO_\ep$, and $\ep>0$ is sufficiently small, then $
\nu_\ep\cdot (\gamma_1^+,1)>0$ on $\pl_\T\gO_\ep$ 
and $\nu_\ep\cdot (\gamma_1^-,-1)>0$ on $\pl_\B\gO_\ep$.

To present the results, we need some hypotheses and notation. As basic hypotheses, we assume 
throughout this paper the following (H1) and (H2):
\begin{enumerate}
\item[(H1)] $F\in C(\cS(N+1)\tim \R^{N+1}\tim\R\tim \ol\gO\tim[-1,1],\R)$, and  for every $(p,x,y)\in\R^{N+1}\tim\ol\gO\tim[-1,1]$ and some constant $\alpha>0$, if $X, Y\in\cS(N+1)$, $r,s\in\R$, $X\geq Y$ in the semi-definite sense, and $r\leq s$ then  $F(X,p,r,x,y)-\alpha r\leq F(Y,p,s,x,y)-\alpha s$.  
\item[(H2)] The set $\gO$ is a bounded, open domain with $C^1$ boundary,  
$\gamma^\pm\in C(\ol\gO\tim[-1,1],\R^{N+1})$, and $\beta^\pm \in C(\ol\gO\tim[-1,1],\R)$.
\end{enumerate}
Here and below, $\cS(k)$ denotes the space of $k\tim k$
real symmetric matrices. 

We always assume that
\beq\label{eq1.3+0}    
g^\pm\in C^1(\ol\gO),\quad g^-(x)<g^+(x) \ \ \FOR x\in\ol\gO.  
\eeq
Since we are interested in the asymptotic behaviour, as $\ep\to 0^+$, of the solutions to the boundary value problem \eqref{eq1.1},  we may focus our 
considerations only on small $\ep$, say $0<\ep<\ep_0$, which allows us to assume always that  
\beq \label{eq1.3+1}\left\{\,\bald
&\ol{\gO_\ep}\subset \ol\gO\tim[-1,1],
\\&\nu_\ep\cdot (\gamma_1^+,1)> 0 \ \ \ON \pl_\T\gO_\ep,
\\&\nu_\ep\cdot (\gamma_1^-,-1)>0 \ \ \ON \pl_\B\gO_\ep. 
\eald\right.
\eeq

A crucial assumption on $\gamma^\pm$ and $\beta^\pm$ is the following:
\beq \label{eq1.4}
\beta^+(x,0)=-\beta^-(x,0)=:\beta_o(x)\ \ \AND \ \ 
\gamma_1^+(x,0)=-\gamma_1^-(x,0)=:\gamma_o(x) \ \FOR x\in\ol\gO. 
\eeq

The necessity of this assumption will be evident in Section 2 where we perform a formal expansion for the clarity of the argument.

We assume that 
\beq\label{eq1.5}
\beta_o\in C^1(\ol\gO,\R) \ \ \AND \ \ \gamma_o\in C^1(\ol\gO,\R^N). 
\eeq
In this paper, our notation that $f\in C^i(K,\R^k)$ for a compact subset $K$ of $\R^j$ indicates 
that $f\mid K\to \R^k$ is the restriction of a function $g\in C^i(\R^j,\R^k)$ to the set $K$.  

We need to assume that for some $k^\pm\in C(\ol\gO,\R^N)$ and $l^\pm\in C(\ol\gO,\R)$, 
\beq \label{eq1.6} \left\{\,\bald
&\gamma_1^\pm(x,y)=\pm\gamma_o(x)+k^\pm(x)y+o(|y|)
\\&\beta^\pm(x,y)=\pm\beta_o(x)+l^\pm(x)y+o(|y|),
\eald \right.
\eeq
as $y\to 0$, where $o(|y|)/|y|\to 0$ uniformly on $\ol\gO$ as $y\to 0$.

Having introduced all the terms in the first order expansion in $y$, we are in a position to write down the limit equation.

To simplify the notations we introduce the terms $b\in C(\ol\gO,\R^N),\,c\in C(\ol\gO,\R)$ :
\begin{align}
\label{eq1.7}b(x)&= \gamma_o(D\gamma_o)^\T
-\fr{1}{g^+-g^-}
\big( g^+k^++g^-k^-
\big),
\\ \label{eq1.8}
c(x)&= -\gamma_o\cdot D\beta_o+
\fr{1}{g^+-g^-}  
\big(g^+l^++g^-l^-
\big),                           
\end{align}
where $D\gamma_o(x)$ denotes the matrix, whose $(i,j)^{\,\text{th}}$ entry is given by $\pl{\gamma_o}_i(x)/\pl x_j$ 
and $D\beta_o(x)$ denotes the row vector, whose $i^{\,\text{th}}$ entry is $\pl \beta_o(x)/\pl x_i$. 
From now on, as in \eqref{eq1.7} and \eqref{eq1.8}, in order to simplify notation, when a function $f$ is given and no confusion is expected, the value $f(x)$ of the function $f$ at $x$ is often written as $f$.  

We define $G\mid \cS(N)\tim\R^N\tim\R\tim \ol\gO\to\R$ by
\beq\label{eq1.8+1}
G(X,p,r,x)=F(A+B+C, (p,\beta_o-\gamma_o\cdot p), r,(x,0)),
\eeq
where 
\beq\label{eq1.8+2}\bald
A&=\begin{pmatrix}X &-X\gamma_o(x)^\T\\ -\gamma_o(x)X&\gamma_o(x)X\gamma_o(x)^\T\end{pmatrix},
\\ B&=\begin{pmatrix}0 &-(pD\gamma_o(x))^\T\\ -pD\gamma_o(x) & b(x)\cdot p\end{pmatrix},
\\ C&=\begin{pmatrix}0 &D\beta_o(x)^\T\\ D\beta_o(x) & c(x)\end{pmatrix}.
\eald
\eeq
We remark that if $A(X)$ denotes the above matrix $A$ and $X\geq 0$, then 
\[
A(X)=\begin{pmatrix}I_N \\ -\gamma_o(x)\end{pmatrix}X\begin{pmatrix}I_N &-\gamma_o(x)^\T\end{pmatrix}\geq 0,
\]  
which ensures that $G$ is degenerate elliptic, i.e., if $X,Y\in\cS(N)$ and $X\leq Y$, then 
$G(X,p,r,x)\geq G(Y,p,r,x)$ for every $(p,r,x)\in\R^N\tim\R\tim\ol\gO$.  In the above and below, 
$I_N$ denotes the $N\tim N$ identity matrix. 

The limit problem for \eqref{eq1.1} with \eqref{eq1.3}, is the boundary value problem
\begin{align}
\label{eq1.9}G(D^2u,Du,u,x)&=0 \ \ \IN\gO,
\\ \label{eq1.10}\nu\cdot Du&=0 \ \ \ON\pl\gO.
\end{align}
When the lateral condition is either oblique or Dirichlet then the limit equation inherits on $\partial\Omega$ the lateral condition.

In the basic case $F(D^2u)=-\Delta u$, we get that the limit equation is given by
 \beq {G(D^2u,Du,u,x)= -(\Delta u + \gamma_o(x)D^2u\gamma_o(x)^\T+b(x)\cdot Du+ c(x)u). }\eeq
 
When the boundary data on the top and bottom is not oblique but it is just the Neumann boundary condition, and when $g^-(x)=0$ as in our previous paper
 \cite{BBI}, the previous quantities $\gamma_o$, $\beta_o$, $k^-$ and $l^\pm$ are zero 
while $k^+(x)=-\frac{D g^+}{g^+}$ and hence we recover the limit equation:
 $$F\Big(\begin{pmatrix}X &0\\ 0& {\frac{Dg^+\cdot Du}{g^+(x)}}\end{pmatrix},(Du, 0), u , (x,0)\Big)=0 \ \IN\gO, \ \ \ \nu\cdot Du=0 \ \ \ON\pl\gO.$$
 
 As discussed also in  \cite{BBI},  the presence of corners in  $\gO_\ep$ requires a little care in the  definition of sub and super  viscosity solution.
Hence we give a definition of viscosity solution to \eqref{eq1.1}, which, 
contrary to \cite{CIL}, does not require the continuity of the solution. 
The precise  definition of sub and super viscosity solution  up to the  boundary  is given in Section 3 below but 
let us anticipate that we call a bounded function $u$ on 
$\ol{\gO_\ep}$ a viscosity solution to \eqref{eq1.1} if 
the upper and lower semicontinuous envelopes of $u$ are viscosity sub and super solutions  
to \eqref{eq1.1} in the sense of  \cite{CIL}, respectively. 
The definition of viscosity solution to \eqref{eq1.9}--\eqref{eq1.10} is made 
in the same way. 

The first result we want to quote is an existence result for $\ep$ fixed small. We shall state it when, on the lateral boundary, the Neumann condition holds:
 \begin{proposition}\label{prop1} Assume (H1), (H2), 
\eqref{eq1.2+2}, 
\eqref{eq1.3+0}, \eqref{eq1.4}, 
\eqref{eq1.5}, and \eqref{eq1.6}. 
Then, 
there exist positive constants $\ep_1<\ep_0$ and $C_0$ such that for each $0<\ep<\ep_1$, there is a viscosity solution  $u^\ep$ to \eqref{eq1.1} with  \eqref{eq1.3}  furthermore any solution $u^\ep$ will satisfy 
$$\sup_{\ol{\gO_\ep}}|u^\ep|\leq C_0.$$ 
\end{proposition}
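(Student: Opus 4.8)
\emph{Strategy.} The plan is to obtain $u^\ep$ by Perron's method and to read off the bound directly from barriers, so that no separate comparison theorem is needed. The point is to construct, for every small $\ep$, a subsolution $\ul u^\ep\in C^2$ and a supersolution $\ol u^\ep\in C^2$ of \eqref{eq1.1} with \eqref{eq1.3} such that $\ul u^\ep\le\ol u^\ep$ on $\ol{\gO_\ep}$ and $\|\ol u^\ep\|_\infty,\|\ul u^\ep\|_\infty\le C_0$ with $C_0$ \emph{independent of $\ep$}. Granting this: (a) Perron's method, in the boundary/corner framework of Section 3, produces a viscosity solution $u^\ep$ with $\ul u^\ep\le u^\ep\le\ol u^\ep$, hence $\sup_{\ol{\gO_\ep}}|u^\ep|\le C_0$; (b) if $u^\ep$ is any solution, then $(u^\ep)^*-\ol u^\ep$ is USC on the compact set $\ol{\gO_\ep}$, and were it positive somewhere it would attain a positive maximum at some $z_0$; testing the subsolution property of $(u^\ep)^*$ at $z_0$ against the $C^2$ function $\ol u^\ep$ (plus a constant) and using that $\ol u^\ep$ satisfies all the boundary inequalities below, one is forced to $F(D^2\ol u^\ep(z_0),D\ol u^\ep(z_0),(u^\ep)^*(z_0),z_0)\le0$, impossible once $(u^\ep)^*(z_0)$ is large by the $-\alpha r$ monotonicity in (H1). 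Hence $(u^\ep)^*\le\ol u^\ep$; symmetrically $(u^\ep)_*\ge\ul u^\ep$; so $\sup_{\ol{\gO_\ep}}|u^\ep|\le C_0$.

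\emph{The barriers.} Take $\ol u^\ep=M+\phi_\ep$, with $M$ a constant fixed last and $\phi_\ep\in C^2(\ol\gO\tim[-1,1])$ a small corrector. A smooth function is a (classical, hence viscosity) supersolution of \eqref{eq1.1} with \eqref{eq1.3} once $F(D^2\ol u^\ep,D\ol u^\ep,\ol u^\ep,x,y)\ge0$ in $\gO_\ep$ and
\begin{gather*}
\gamma_1^+\cdot D_x\phi_\ep+\pl_y\phi_\ep\ge\beta^+\ON\pl_\T\gO_\ep,\qquad
\gamma_1^-\cdot D_x\phi_\ep-\pl_y\phi_\ep\ge\beta^-\ON\pl_\B\gO_\ep,\\
\nu\cdot D_x\phi_\ep\ge0\ON\pl_\L\gO_\ep;
\end{gather*}
the obliqueness \eqref{eq1.3+1} is exactly what lets one absorb the outward normal component a touching test function may add at a boundary point, and the edges of $\gO_\ep$ cause nothing extra since $\phi_\ep$ will satisfy all three inequalities at once. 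If $\phi_\ep$ is chosen with $\|\phi_\ep\|_\infty=O(\ep)$ and $\|D\phi_\ep\|_\infty+\|D^2\phi_\ep\|_\infty\le C$ uniformly in $\ep$, the PDE inequality is free: since $M+\phi_\ep\ge0$, (H1) gives
\[
F(D^2\ol u^\ep,D\ol u^\ep,\ol u^\ep,x,y)\ge\alpha(M+\phi_\ep)+F(D^2\phi_\ep,D\phi_\ep,0,x,y)\ge\alpha(M-1)-C',
\]
with $C'$ depending only on $C$ (not on $\ep$), which is $\ge0$ once $M\ge1+C'/\alpha$. The subsolution is $\ul u^\ep=-M-\tilde\phi_\ep$, with $\tilde\phi_\ep$ built just as $\phi_\ep$ but with every inequality and the sign of $\beta_o$ reversed; now the $-\alpha r$ term in (H1) gives $F(D^2\ul u^\ep,D\ul u^\ep,\ul u^\ep,x,y)\le0$. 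Enlarging $M$ if needed, $\ul u^\ep\le\ol u^\ep$, and one sets $C_0:=M+1$.

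\emph{Constructing $\phi_\ep$, and the role of \eqref{eq1.4}.} By the first-order expansions \eqref{eq1.6}, on $\pl_\T\gO_\ep$ one has $\beta^+(x,\ep g^+(x))=\beta_o(x)+O(\ep)$ and $\gamma_1^+=\gamma_o+O(\ep)$, and on $\pl_\B\gO_\ep$ one has $\beta^-(x,\ep g^-(x))=-\beta_o(x)+O(\ep)$ and $\gamma_1^-=-\gamma_o+O(\ep)$. Thus the first two displayed inequalities ask, to leading order, that $\pl_y\phi_\ep$ be $\approx\beta_o(x)$ at the top and also $\approx\beta_o(x)$ at the bottom — and it is precisely assumption \eqref{eq1.4}, $\beta^+(x,0)=-\beta^-(x,0)=\beta_o(x)$, that makes these two target values coincide up to $O(\ep)$. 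Hence one may take $\pl_y\phi_\ep(x,y)$ to be a smooth profile near $\beta_o(x)$ that interpolates affinely in $y$ across the section $\{\ep g^-(x)<y<\ep g^+(x)\}$ (of width $O(\ep)$) between two boundary values that beat the required thresholds by an $O(\ep)$ margin; this forces only $\pl_y^2\phi_\ep=O(1)$, and together with the thinness $|y|\le\ep\max_{\ol\gO}|g^\pm|$ it makes every derivative of $\phi_\ep$ of size $O(1)$ while $\|\phi_\ep\|_\infty=O(\ep)$. Two routine adjustments remain: the merely continuous $k^\pm,l^\pm$ and the merely $C^1$ data $\beta_o,\gamma_o,g^\pm$ are replaced by smooth approximants at scale $\sim\ep^{1/2}$, whose growing derivatives are killed by the accompanying powers of $\ep$; and for the lateral Neumann condition one subtracts $\lambda\ep\,\eta(x)$ for a fixed $\eta\in C^2(\ol\gO)$ with $\nu\cdot D\eta<0$ on $\pl\gO$, available by the $C^1$ regularity of $\pl\gO$ (for a Dirichlet or oblique lateral condition the adjustment is analogous, and in the Dirichlet case it only requires $M$ large).

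\emph{Expected main obstacle.} Everything is routine except the $\ep$-uniformity of the barriers, which rests entirely on \eqref{eq1.4}. If \eqref{eq1.4} failed, the two boundary targets for $\pl_y\phi_\ep$ would differ by $O(1)$ across a section of width $O(\ep)$, forcing $\pl_y^2\phi_\ep\sim\ep^{-1}$; one would still get a barrier, hence existence, for each fixed $\ep$, but then neither the estimate $F(D^2\ol u^\ep,\dots)\ge0$ above nor the bound $\|\phi_\ep\|_\infty=O(\ep)$ would survive, and with them the $\ep$-independence of $C_0$. So \eqref{eq1.4} is not a technical convenience but precisely the hypothesis that makes the uniform bound possible, consistently with the remark following \eqref{eq1.4} in the text.
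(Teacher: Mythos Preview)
Your approach is essentially the same as the paper's: build explicit $C^2$ strict sub- and supersolutions of the form $\pm M$ plus a corrector whose $y$-derivative matches $\beta_o(x)$ to leading order (this is where \eqref{eq1.4} is used, exactly as you diagnose), add a quadratic-in-$y$ term $\tfrac{\Lambda}{2}(y-\ep h(x))^2$ to force strict inequality on $\pl_\T\gO_\ep\cup\pl_\B\gO_\ep$ by an $O(\ep)$ margin, smooth the merely $C^1$ data so that second derivatives stay $O(1)$, and then run Perron plus the elementary comparison with a strict barrier. The paper's corrector is $\rho(x)+\ol v_\ep(x)y+\tfrac{\Lambda}{2}(y-\ep h)^2$ with $\ol v=\beta_o-\gamma_o\cdot D\rho$; your version takes, in effect, $\rho=0$ and handles the lateral Neumann condition with an $O(\ep)$ term $-\lambda\ep\,\eta(x)$ instead of the $O(1)$ function $\rho$. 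Both choices work.

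One point you pass over too quickly: the statement that a classical supersolution is automatically a viscosity supersolution is not free at the corners $\pl_\L\gO_\ep\cap(\pl_\T\gO_\ep\cup\pl_\B\gO_\ep)$. The paper devotes Lemma~\ref{maximum} and Proposition~\ref{equivalenza} to this: one shows that for small $\ep$ there is $\theta>0$ with $c-t(\gamma+\theta\gamma^\pm)\in\gO_\ep$ for small $t>0$ at each corner $c$, so that any $C^1$ function attaining an extremum there satisfies $(\gamma+\theta\gamma^\pm)\cdot D\psi(c)\ge 0$, which forces one of the two boundary inequalities. Your sentence ``the edges of $\gO_\ep$ cause nothing extra since $\phi_\ep$ will satisfy all three inequalities at once'' is not quite the right justification; what matters is that a touching test function at a corner must satisfy at least one of the directional inequalities, and that requires the geometric lemma just mentioned. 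This is a technicality rather than a gap in strategy, but it is the one place where more than a line is needed.
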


We can now state the convergence theorem.
Let $S_\ep$ denote the set of viscosity solutions to \eqref{eq1.1} with \eqref{eq1.3} for given 
$\ep>0$.
Under the hypotheses of Proposition \ref{prop1}, we define the half relaxed limits $u^\pm$ 
by 
\[\bald
u^+(x)&=\lim_{r\to 0^+}\sup\{u(\xi,\eta)\mid u\in S_\ep,\ (\xi,\eta)\in \ol{\gO_\ep},\ 
0<\ep<r,\ |\xi-x|<r \},
\\ u^-(x)&=\lim_{r\to 0^+}\inf\{u(\xi,\eta)\mid u\in S_\ep,\ (\xi,\eta)\in\ol{\gO_\ep},\ 
0<\ep<r,\ |\xi-x|<r\},
\eald
\]
which are bounded functions on $\ol\gO$ by Proposition \ref{prop1}.

\begin{theorem}\label{thm1} Under the hypotheses of Proposition \ref{prop1}, 
the functions $u^+$ and $u^-$ are a viscosity sub and super solutions to \eqref{eq1.9}--\eqref{eq1.10}, respectively. 
\end{theorem}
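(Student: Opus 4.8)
The plan is to run the perturbed test function method of Evans together with the half--relaxed limit technique of Barles and Perthame. It suffices to prove that $u^+$ is a subsolution; the statement for $u^-$ follows by the symmetric argument (interchanging sub/super, $\max$/$\min$, and reversing all inequalities). Since $F$ is continuous, it is enough to test the subsolution property against $\phi\in C^\infty$, so I fix such a $\phi$ and a point $x_0\in\ol\gO$ at which $u^+-\phi$ has a \emph{strict} maximum over $\ol{B_\gd(x_0)}\cap\ol\gO$, normalised by $u^+(x_0)=\phi(x_0)$, and I argue by contradiction: I assume $G(D^2\phi(x_0),D\phi(x_0),\phi(x_0),x_0)>0$ and, when $x_0\in\pl\gO$, also $\nu(x_0)\cdot D\phi(x_0)>0$. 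By continuity of $G$ (inherited from $F$), of $\nu$ and of $D\phi$, there are $\gth>0$ and a smaller $\gd>0$ such that $G(D^2\phi,D\phi,\phi,\cdot)\ge\gth$ on $\ol{B_\gd(x_0)}\cap\ol\gO$ and $\nu\cdot D\phi\ge\gth$ on $\pl\gO\cap\ol{B_\gd(x_0)}$; if $x_0\in\gO$ I shrink $\gd$ further so that $\ol{B_\gd(x_0)}\subset\gO$ and the lateral boundary is irrelevant.

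Next I build the perturbed test function. Set $\rho(x):=\beta_o(x)-\gamma_o(x)\cdot D\phi(x)$ and $M(x):=\gamma_o(x)D^2\phi(x)\gamma_o(x)^\T+b(x)\cdot D\phi(x)+c(x)$, with $b,c$ from \eqref{eq1.7}--\eqref{eq1.8}. Following the formal expansion of Section 2, I would take, for a small constant $\mu>0$ to be fixed and $\ep>0$ small,
\[
\phi_\ep(x,y):=\phi(x)+y\,\rho(x)+\tfrac12 y^2 M(x)+\ep\,a(x)\,y+\mu\,(y-\ep g^+(x))(y-\ep g^-(x)),
\]
where $a\in C(\ol\gO)$ is the coefficient produced by matching the two oblique conditions to first order in $y$ via \eqref{eq1.6}. (All coefficient functions $\gamma_o,\beta_o,g^\pm,k^\pm,l^\pm$ are first replaced by smooth approximations so that $\phi_\ep\in C^2$; the resulting errors are $o(\ep)$ in the boundary residuals and $o(1)$ in the interior equation, hence are dominated below.) The term $y\rho(x)$ carries out the leading-order matching, $\tfrac12 y^2 M(x)$ and $\ep a(x)y$ carry out the order-$\ep$ matching (the latter absorbing the part of the mismatch antisymmetric between top and bottom), and the last term is a small penalisation which vanishes on $\pl_\T\gO_\ep\cup\pl_\B\gO_\ep$ and is convex in $y$. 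I claim $\phi_\ep$ has, uniformly for $|y|\le\ep M_0$ with $M_0:=\max_{\ol\gO}(|g^+|+|g^-|)$, the following properties as $\ep\to0^+$: \emph{(a)} $\phi_\ep(x,y)\to\phi(x)$ and $(D^2\phi_\ep,D\phi_\ep)(x,y)\to(A+B+C+2\mu P,(D\phi(x),\rho(x)))$, where $A,B,C$ are the matrices of \eqref{eq1.8+2} with $X=D^2\phi(x)$, $p=D\phi(x)$, and $P$ is the matrix with $1$ in the $(N+1,N+1)$ position and $0$ elsewhere; \emph{(b)} $\gamma^+\cdot D\phi_\ep-\beta^+=\mu\ep(g^+-g^-)+o(\ep)$ on $\pl_\T\gO_\ep$, and likewise with $\gamma^-,\beta^-$ on $\pl_\B\gO_\ep$; \emph{(c)} $\nu\cdot D_x\phi_\ep=\nu\cdot D\phi+O(\ep)$ on $\pl_\L\gO_\ep$.

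Verifying \emph{(a)}--\emph{(c)} is the step I expect to be the main obstacle: it is exactly the rigorous version of the Section 2 expansion. At order $y^0$ the two oblique conditions collapse to the single equation $\gamma_o\cdot D\phi+\pl_y\phi_\ep|_{y=0}=\beta_o$ — consistent only thanks to \eqref{eq1.4} — which forces the coefficient $\rho$; at order $y^1$ one gets a $2\times2$ linear system for the pair $(M,a)$ whose solution produces exactly the formulas \eqref{eq1.7}--\eqref{eq1.8} (hence the above $M$), and, crucially, makes the $(N+1,N+1)$-entry of $D^2\phi_\ep(\cdot,0)$ coincide with that of $A+B+C$, so that $F(D^2\phi_\ep,D\phi_\ep,\phi_\ep,\cdot)\to G(D^2\phi,D\phi,\phi,\cdot)$; the terms $k^\pm,l^\pm$ and the regularity \eqref{eq1.5} enter precisely in controlling these order-$\ep$ pieces and in reducing the boundary residuals from $O(\ep)$ to $o(\ep)$. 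Granting \emph{(a)}--\emph{(c)}: by (H1) the matrix $2\mu P\ge 0$ lowers $F$ by at most $\go_F(2\mu)$, where $\go_F$ is a modulus of continuity of $F$ on the relevant compact set, so fixing $\mu$ with $\go_F(2\mu)<\gth/2$ and then $\ep$ small enough yields $F(D^2\phi_\ep(x,y),D\phi_\ep(x,y),\phi_\ep(x,y),(x,y))\ge\gth/4$ on $\gO_\ep\cap(B_\gd(x_0)\times\R)$; by \emph{(b)} the oblique expressions of $\phi_\ep$ are $\ge\tfrac12\mu\ep(g^+-g^-)>0$ on $\pl_\T\gO_\ep\cup\pl_\B\gO_\ep$; and by \emph{(c)} together with the contradiction hypothesis, $\nu\cdot D_x\phi_\ep\ge\gth/2>0$ on the part of $\pl_\L\gO_\ep$ near $x_0$. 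Thus $\phi_\ep$ is, near $x_0$, a strict viscosity supersolution of \eqref{eq1.1} with \eqref{eq1.3}.

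Finally I conclude by the relaxed-limit argument. Let $\ep_n\downarrow0$, $u_n\in S_{\ep_n}$, $(\xi_n,\eta_n)\in\ol{\gO_{\ep_n}}$ be as given by the definition of $u^+(x_0)$ (so $\xi_n\to x_0$ and $u_n(\xi_n,\eta_n)\to u^+(x_0)$), and set $\hat u_n:=(u_n)^*$, a USC viscosity subsolution of \eqref{eq1.1}. On the compact set $\ol{V_n}$, $V_n:=\gO_{\ep_n}\cap(B_\gd(x_0)\times\R)$, the USC function $\hat u_n-\phi_{\ep_n}$ attains its maximum $m_n$ at some $p_n=(x_n,y_n)$. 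Since $u_n(\xi_n,\eta_n)\le\hat u_n(\xi_n,\eta_n)$ and $\phi_{\ep_n}\to\phi$ uniformly, $\liminf_n m_n\ge u^+(x_0)-\phi(x_0)=0$; on the other hand, along a subsequence $p_n\to(\bar x,0)$ (because $|y_n|\le\ep_n M_0\to0$) with $\bar x\in\ol{B_\gd(x_0)}\cap\ol\gO$, and the definition of $u^+$ as an upper relaxed limit gives $\limsup_n m_n\le u^+(\bar x)-\phi(\bar x)\le0$, with equality only if $\bar x=x_0$ by strictness of the maximum. Hence $m_n\to0$ and $p_n\to(x_0,0)$, so for large $n$ the point $p_n$ lies in the neighbourhood where the previous step applies and $x_n\in B_\gd(x_0)$. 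Now $\phi_{\ep_n}+m_n$ touches $\hat u_n$ from above at $p_n$, so by the definition of viscosity subsolution up to the boundary (Section 3) at least one of the following holds at $p_n$: $F(D^2\phi_{\ep_n}(p_n),D\phi_{\ep_n}(p_n),\hat u_n(p_n),p_n)\le0$; or $p_n\in\pl_\T\gO_{\ep_n}$ and $\gamma^+\cdot D\phi_{\ep_n}(p_n)-\beta^+(p_n)\le0$ (or the analogous statement on $\pl_\B\gO_{\ep_n}$); or $p_n\in\pl_\L\gO_{\ep_n}$ and $\nu(x_n)\cdot D_x\phi_{\ep_n}(p_n)\le0$. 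But $\hat u_n(p_n)=\phi_{\ep_n}(p_n)+m_n$ with $m_n\to0$, so continuity of $F$ in the zeroth-order slot upgrades the interior bound to $F(D^2\phi_{\ep_n}(p_n),D\phi_{\ep_n}(p_n),\hat u_n(p_n),p_n)\ge\gth/8>0$ for $n$ large, and \emph{(b)}--\emph{(c)} rule out the two boundary alternatives. This contradiction proves that $u^+$ is a viscosity subsolution of \eqref{eq1.9}--\eqref{eq1.10}; as noted, the proof that $u^-$ is a supersolution is symmetric.
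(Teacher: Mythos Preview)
Your approach is correct and is essentially the same as the paper's: both build a perturbed test function $\phi_\ep=\phi+yv+\text{(quadratic in $y$)}+\text{(penalisation)}$, smooth the coefficients via mollification (Lemma \ref{lem1}/Remark \ref{rem1}), check that the oblique boundary residuals are strictly positive of order $\ep$, and then run the half--relaxed limit argument. Your corrector $\tfrac12 y^2 M+\ep a y$ is, up to harmless $O(\ep^2)$ terms, the expansion of the paper's $\tfrac12[(y-\ep g^-)^2w^++(y-\ep g^+)^2w^-]$ (indeed $M=w^++w^-$ and $a=-(g^-w^++g^+w^-)$), and your penalisation $\mu(y-\ep g^+)(y-\ep g^-)$ plays the same role as the paper's $\tfrac{\gd}{2}(y-\ep h)^2$; the paper argues directly and sends $\gd\to0$ at the end, whereas you argue by contradiction and fix $\mu$ small via the uniform continuity of $F$ --- these are equivalent presentations.

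The one place where your write--up is genuinely thinner than the paper's is the verification of your claims (a)--(c): you correctly identify this as ``the main obstacle'' and sketch the order--matching, but the paper carries out these computations in full (the analogues of \eqref{eq3.4+1}--\eqref{eq3.5} and the limits of $D^2\Psi_\ep$), tracking carefully that the mollified coefficients satisfy $D^2 v_\ep=o(\ep^{-1})$, $D^2 w_\ep^\pm=o(\ep^{-2})$, etc., so that all error terms are $o(1)$ in the Hessian and $o(\ep)$ in the boundary residuals. Filling that in would make your argument complete.
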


Clearly, if $G$, as defined in \eqref{eq1.8+1} and \eqref{eq1.8+2}, satisfies the comparison principle then 
$u_o:=u^+=u^-$ and it is a solution of \eqref{eq1.9}--\eqref{eq1.10}. Furthermore $u^\ep$ converges uniformly to $u_o$, see Corollary \ref{cor1}.

\medskip

Remark that the assumption \eqref{eq1.4} is indeed crucial. Consider problem:
$$\left\{
\begin{array}{l}
-u_{yy}+u=1 \ \ \ \  \IN (0,1)\times (0,\ep),\\
u_y(x,\ep)=1\ ,  \ \ -u_y(x,0)=0 \ ,  \ \ 
u_x(0,y)=0,\ \ \  u_x(1,y)=0
\end{array}
\right.
$$
for which all hypotheses, but \eqref{eq1.4}, of Proposition \ref{prop1} are satisfied. Clearly its solution is  $u^\ep(x,y)=\frac{1}{e^\ep-e^{-\ep}}\left(e^y+e^{-y}\right)+1$, which is not bounded as $\ep$ goes to zero.

As already mentioned, in the paper the existence results  and the convergence results will be stated and proved for a more general case  of lateral conditions, see Proposition \ref{exisnew} and Theorem \ref{thm1new}.

After the work of Hale and Raugel, there has been a number of interesting results that generalise it in many {contexts} see e.g. the works of Arrieta, Nakasato, Nogueira, Perreira, Villanueva-Pesqueira \cite{AP,ANP, 
ANaPe, AVP} .... with for example oscillating boundary or quasi linear operators.
On the other hand there is also a very large number of papers that treat the “variational” convergence of  functionals $F_\ep$ to a new functional defined in a lower dimensional space, where the main tool is $\Gamma$ convergence, it is impossible to mention all the works in that realm, but we can mention e.g. \cite{ABuPe,
AnBaPe, BFF}. But, even if the problems are not so far mathematically, we believe that the point of view are so different that it would require quite some work to make the connection. And even if this would be very interesting it is certainly behind the scope of this introduction.

The paper is organised in the following way: In the next section we perform the formal expansion that will allow us to guess the limit equation and upload the test function approach. In the third section we prove the existence and bounds at $\ep$ fixed. In the fourth section we prove the main asymptotic results and finally in the last section we prove that when $F$ is a Bellman-Isaacs operator under the right general condition the limit operator satisfies the comparison principle i.e. the convergence is uniform.

\section{Formal expansion} To gain an intuitive understanding of \eqref{eq1.9}, 
we perform the following formal calculations. First, we assume that the following 
expansion, as $\ep\to 0^+$, is valid in an appropriate sense: 
\[\bald
&u^\ep(x,y)=u^0(x)+\ep u^1(x,y/\ep)+\ep^2u^2(x,y/\ep)+\cdots,
\eald
\]
where $u^0\mid \ol\gO\to \R$, $u^1, u^2\mid \ol\gO\tim[-1,1]\to \R$, and 
every ingredients appearing above and elsewhere  are supposed to be smooth enough.  
At this point, we do not require condition \eqref{eq1.4} yet, but consider the expansion formula \eqref{eq1.6}. That is, we set it as follows. 
\beq \label{eq1.6+1} \left\{\,\bald
&\gamma_1^\pm(x,y)=\gamma_1^\pm(x,0)+k^\pm(x)y+o(|y|)
\\&\beta^\pm(x,y)=\beta^\pm(x,0)+l^\pm(x)y+o(|y|).
\eald \right.
\eeq
Insert the above expansions 
into the boundary condition \eqref{eq1.2}, on the respective boundaries $\pl_\T\gO_\ep$ and $\pl_\B\gO_\ep$, 
\[\bald
0&
=\gamma_1^\pm\cdot D_xu^\ep\pm u^\ep_y-\beta^\pm
\\&=\gamma_1^\pm\cdot D u^0 +\ep \gamma_1^\pm\cdot D_xu^1
+\ep^2 \gamma^\pm\cdot D_xu^2 
 \pm u^1_y
\pm \ep u^2_y -\beta^\pm  +\cdots 
\\& =
\gamma_1^{\pm}(x,0)\cdot Du^0(x)+\ep g^\pm k^\pm(x) \cdot Du^0(x)
+\ep \gamma_1^{\pm}(x,0)\cdot D_xu^1(x,g^\pm) 
\\&\quad \pm u^1_y(x,g^\pm)
\pm \ep u^2_y(x,g^\pm)  -\beta^{\pm}-\ep g^\pm l^\pm(x)+\cdots.
\eald
\]
Here and below, for function $u=u(x,y)$, $u_y$ denotes the $y$-derivative $\pl u/\pl y
=D_y u$ of $u$. 
Equating to zero the coefficients of $\ep^0$ and $\ep^1$ on the right-hand side of the above, 
we obtain 
\begin{align}
& 0=\gamma_1^{\pm}(x,0)\cdot Du^0(x)\pm u^1_y(x,g^\pm(x))
- \beta^{\pm}(x,0),
\label{eq2.1}
\\&0=g^\pm k^\pm(x)\cdot Du^0(x)+\gamma_1^{\pm}(x,0)\cdot D_xu^1(x,g^\pm(x))
 \label{eq2.2}
\\&\quad \pm u^2_y(x,g^\pm(x))
-g^\pm(x)l^\pm(x).\notag
\end{align}
In the domain $\gO_\ep$, we have
\beq\label{eq2.3}
0=F(D^2u^\ep,Du^\ep,u^\ep,x,y),
\eeq 
with 
\beq \label{eq2.4} \left\{
\bald
D^2u^\ep&= \begin{pmatrix} D_x^2u^0&(D_x u_y^1)^\T\\ D_xu^1_y& \ep^{-1}u^1_{yy}+u^2_{yy} \end{pmatrix}+o(1), 
\\ Du^\ep&=(D_xu^0,u^1_y)+o(1),\qquad 
u^\ep=u^0(x)+o(1).
\eald \right.
\eeq
A natural ansatz here, to obtain a PDE for $u^0=\lim_{\ep\to 0^+}u^\ep$, is to impose that 
\[
u^1_{yy}(x,y)=0 \ \ \FOR x\in\ol\gO, \, g^-(x)< y<g^+(x).
\]
To achieve this, we assume that there is a function $v\mid\ol\gO\to \R$ such that
\beq\label{eq2.5}
u^1(x,y)=v(x)y \ \ \FOR y\in\R.
\eeq
With this ansatz, \eqref{eq2.1} and \eqref{eq2.2} read 
\begin{align}
& 0=\gamma_1^{\pm}(x,0)\cdot Du^0(x)\pm v(x)-
\beta^{\pm}(x,0), 
\label{eq2.6}
\\
&0=g^\pm k^\pm(x)\cdot Du^0(x)+
 g^\pm\gamma_1^{\pm}(x,0)\cdot Dv(x)
 \pm u_y^2(x,g^\pm)-g^\pm l^\pm(x). \label{eq2.7}
\end{align} 
The solution of \eqref{eq2.6} with respect to $v$ is 
$$
v(x)=\beta^+(x,0)-\gamma_1^+\cdot Du^0(x) =-\beta^-(x,0)+\gamma_1^-\cdot Du^0(x).
$$
This is well defined if condition \eqref{eq1.4} holds, and it determines the value for $v$ as 
\beq\label{eq2.5+1} v(x):=\beta_o-\gamma_o\cdot Du^0(x),
\eeq
and the expansion formula \eqref{eq1.6+1} turns out to be identical to \eqref{eq1.6}.

\noindent With this choice of $v$, from  \eqref{eq2.7}, we obtain
\beq \label{eq2.8+3}
u^2_y(x,g^\pm(x))=\pm g^\pm(l^\pm(x)
-k^\pm(x)\cdot Du^0(x)\mp\gamma_o(x)\cdot Dv  ). 
\eeq 

\noindent For each $x$, we look for a quadratic function $Q(y)$ with slope 
$u^2_y(x,g^-(x))$ and $u^2_y(x,g^+(x))$ at $y=g^-(x)$ and $y=g^+(x)$, respectively, 
and choose: 
\[
Q(y)=\fr{(y-g^-)^2}{2(g^+-g^-)}u^2_y(x,g^+)-
\fr{(y-g^+)^2}{2(g^+-g^-)}u^2_y(x,g^-). 
\]
Thus, our choice of  $u^2$ is: 
\beq \label{eq2.8+4}\bald
u^2(x,y)&=\fr{(y-g^-)^2g^+}{2(g^+-g^-)}
\Big(l^+(x)
-k^+(x)\cdot Du^0(x)-\gamma_o\cdot Dv
\Big)
\\&\quad+\fr{(y-g^+)^2g^-}{2(g^+-g^-)}\Big(l^-(x)
-k^-(x)\cdot Du^0(x)
+\gamma_o\cdot Dv \Big).
\eald
\eeq
We set 
\beq \label{eq2.8+5}
w^\pm(x)= \fr{g^\pm}{(g^+-g^-)}(l^\pm(x)
-k^\pm(x)\cdot Du^0(x)\mp\gamma_o\cdot Dv ), 
\eeq
so that \eqref{eq2.8+4} can be written as
\beq\label{a1}
u^2(x,y)=\fr 12(y-g^-(x))^2w^+(x)+\fr 12(y-g^+(x))^2w^-(x). 
\eeq

In view of \eqref{eq2.4}, we wish to give convenient formulas for $Du^1$ and $u^2_{yy}$, 
which involve explicitly neither functions $u^1$ nor $u^2$.  We compute that
\begin{align} \label{a2}
Du^1_y&=Dv=D(\beta_o-\gamma_o\cdot Du^0(x) )
\\\notag &=D\beta_o-Du^0
D\gamma_o-\gamma_oD^2u^0,
\end{align}
\begin{align}\label{a3}
\gamma_o \cdot Dv
&=\gamma_o \cdot D(\beta_o-\gamma_o\cdot Du^0) 
\\ \notag&=\gamma_o\cdot D\beta_o-\gamma_o\cdot Du^0 D\gamma_o-\gamma_oD^2u^0 \gamma_o^\T
\\ \notag&=\gamma_o\cdot D\beta_o-[\gamma_o(D\gamma_o)^\T]\cdot Du^0-\gamma_oD^2u^0 \gamma_o^\T,
\end{align}
and 
\begin{align}\label{a4}
u^2_{yy}(x,y)&=w^+(x)+w^-(x)
\\\notag&= \fr{1}{g^+-g^-}\Big(g^+l^++g^-l^-\Big)
-\fr{1}{g^+-g^-}\Big(g^+k^++g^-k^-\Big)\cdot Du^0
 \quad -\gamma_o\cdot Dv
\\\notag&
= \fr{1}{g^+-g^-}\Big(g^+l^++g^-l^-\Big)
-\fr{1}{g^+-g^-}\Big(g^+k^++g^-k^-\Big)\cdot Du^0
\\\notag& \quad -\gamma_o\cdot D\beta_o+[\gamma_o(D\gamma_o)^\T]\cdot Du^0+\gamma_oD^2u^0\,\gamma_o^\T.
\end{align}
Recalling \eqref{eq1.7} and \eqref{eq1.8}, we obtain
\beq\label{a5}
u^2_{yy}=\gamma_o D^2u^0 \gamma_o^\T+b\cdot Du^0+c.  
\eeq
We go back to \eqref{eq2.4}, to find that as $\ep\to 0^+$,
\[\bald
D^2u^\ep&=\begin{pmatrix} D^2u^0& (D\beta_o-Du^0D\gamma_o-\gamma_oD^2u^0)^\T\\ 
D\beta_o-Du^0D\gamma_o-\gamma_oD^2u^0& \gamma_o D^2u^0 \gamma_o^\T+b\cdot Du^0+c
\end{pmatrix}+o(1)
\\&=\begin{pmatrix} D^2u^0&-D^2u^0 \gamma_o^\T\\ 
-\gamma_oD^2u^0& \gamma_o D^2u^0 \gamma_o^\T
\end{pmatrix}
+\begin{pmatrix} 0&-(Du^0D\gamma_o)^\T\\ 
-Du^0D\gamma_o& b\cdot Du^0
\end{pmatrix}
+\begin{pmatrix} 0& D\beta_o^\T\\ 
D\beta_o& c
\end{pmatrix}
+o(1),
\eald
\]
and
\[
Du^\ep=(Du^0,v)+o(1)=(Du^0,\beta_o-\gamma_o\cdot Du^0)+o(1). 
\]
Accordingly, in the limit as $\ep\to 0^+$, we deduce that \eqref{eq1.9} is the equation for the limit function $u^0$ of the solutions $u^\ep$ to \eqref{eq1.1}
 and 
\eqref{eq1.2}.

\section{Existence and bounds}

As mentioned in the introduction and as seen in  \cite{BBI},  the presence of corners in  $\gO_\ep$ requires a little care in the  definition of viscosity sub and super solution.  Instead since the choice of lateral condition does not affect the proofs of the main results we now introduce the more general case.

But first some notations:
In the following proofs, we adopt the $o(\ep^p)$ and $O(\ep^p)$ symbols, 
to indicate, respectively, functions $q(x,\ep)$ and $r(x,\ep)$ on $\ol\gO$ 
such that $|q(x,\ep)|/\ep^p$ converges uniformly to $0$ on $\ol\gO$ 
and $r(x,\ep)/\ep^p$ is uniformly bounded on $\ol\gO$ as $\ep\to 0^+$. 
Similarly, when $q(x,y,\ep)$ and $r(x,y,\ep)$ are functions of $\ol{\gO_\ep}$,
$q=o(\ep^p)$ and $r=O(\ep)$ mean that, as $\ep\to 0^+$, $\sup_{\ol{\gO_\ep}}|q(x,y,\ep)|/\ep^p\to 0$ and $\sup_{\ol{\gO_\ep}}|r(x,y,\ep)/\ep^p|$ is bounded by  a constant.  

Recall that $\ep_0>0$ has been chosen so that \eqref{eq1.3+1} holds, that is, for $\ep\in(0,\ep_0)$,   
we have 
\begin{gather}\notag  \ol{\gO_\ep}\subset \ol\gO\tim[-1,1],
\\ \label{oblique_TB}
-\ep Dg^+\cdot \gamma_1^++1>0 \ \ \ON \pl_\T\gO_\ep \ \ \ \AND
\ \ \ \ep Dg^-\cdot \gamma_1^-+1>0 \ \ \ON \pl_\B\gO_\ep.
\end{gather}

Let $\beta\in C(\pl\gO\tim[-1,1],\R)$ and $\gamma\in C(\pl\gO\tim[-1,1],\R^{N+1})$ be given.  We are interested in the boundary value problems 
for \eqref{eq1.1}, with the boundary condition 
\eqref{eq1.2} on the top and bottom of the boundary and either of the conditions on the lateral boundary $\pl_\L\gO_\ep$: 
\beq \label{eq.lbc.2}
 \gamma\cdot Du=\beta, 
\eeq
or 
\beq \label{eq.lbc.3}  
u=\beta.
\eeq
In the case of \eqref{eq.lbc.2} we need {the} following conditions
 \beq \label{eq5.2111}\left\{\,\bald 
&\gamma=(\gamma_1,\gamma_2)\in C(\pl\gO\tim[-1,1],\R^{N+1}), \quad\beta\in C(\pl\gO\tim[-1,1],\R), 
\\& \gamma_1\cdot\nu>0 \ \ \  \ON \pl\gO\tim[-1,1],\quad \gamma_2(x,0)=0 \ \ \ \FOR x\in \pl\gO. 
\eald
\right. 
\eeq
For notational convenience, given $\ep>0$ we call 
problem \oblique (resp., problem \dirichlet) 
problem \eqref{eq1.1}, 
with boundary condition $B$ as in \eqref{eq.lbc.2} 
(resp., problem \eqref{eq1.1},
with boundary condition $B$ as in \eqref{eq.lbc.3}).

Notice that problem 
\eqref{eq1.1}--\eqref{eq1.3} is a particular case of problem \oblique, with $\gamma=\nu_\L$
and $\beta=0$,  which we call \neumann.  

In the sequel when 
we focus on either problem \neumann, \oblique, or \dirichlet we call it \problem.

Viscosity solutions to these problems are defined as follows. 
When $u$ is a bounded 
function on $\ol{\gO_\ep}$, we call $u$ a viscosity subsolution of problem \oblique \ (resp., problem \dirichlet )
if the following condition holds 
for the upper semicontinuous envelope $v$ 
of $u$: 
whenever $\phi\in C^2(\ol{\gO_\ep})$, $\hat z=(\hat x,\hat y)\in\ol{\gO_\ep}$ and 
$\max_{\ol{\gO_\ep}}(v-\phi)=(v-\phi)(\hat z)$, we have 
\beq\label{sub1}
F(D^2\phi(\hat z),D\phi(\hat z),v(\hat z),\hat z)\leq 0
\eeq
if $\hat z\in\gO_\ep$,  we have either \eqref{sub1} or 
\beq \label{subL}
\gamma (\hat z)\cdot D\phi(\hat z)\leq \beta(\hat z)  \  \ (\mbox{resp., } v(\hat z)\leq \beta(\hat z)), 
\eeq
if $\hat z\in\pl_\mathrm{L}\gO_{\ep}\stm(\pl_\mathrm{B}\gO_\ep \cup\pl_\mathrm{T}\gO_\ep)$, 
we have either \eqref{sub1} or 
\beq \label{subB}
 \gamma^-  (\hat z) \cdot D\phi(\hat z)  \leq \beta^- (\hat z)
\eeq if  $\hat z\in\pl_\mathrm{B}\gO_{\ep}\stm\pl_\mathrm{L}\gO_\ep$, 
we have either \eqref{sub1} or
\beq
\label{subT}
\gamma^+ (\hat z)\cdot D\phi(\hat z) \leq \beta^+ (\hat z)
\eeq
if $\hat z\in\pl_\mathrm{T}\gO_\ep\stm\pl_\mathrm{L}\gO_\ep$,
we have either \eqref{sub1}, \eqref{subL}, or \eqref{subB} if $\hat z\in\pl_\mathrm{L}\gO_\ep\cap\pl_\mathrm{B}\gO_\ep$, and we have either \eqref{sub1}, \eqref{subL}, or 
 \eqref{subT} if $\hat z\in \pl_\mathrm{L}\gO_\ep\cap\pl_\mathrm{T}\gO_\ep$.
Replacing ``$\max$'', ``$\leq$'', and ``upper semicontinuous envelope", with ``$\min$'', ``$\geq $'', 
and ``lower semicontinuous envelope'' respectively, in the above condition yields the right definition of viscosity supersolution. 
Viscosity solutions are functions which are both viscosity sub and super solutions. 

The main result of this section is the following:

\begin{proposition}\label{exisnew}
 Assume (H1), (H2), 
\eqref{eq1.2+2},
\eqref{eq1.3+0}, \eqref{eq1.4}, 
\eqref{eq1.5}, and \eqref{eq1.6}. 
Then, 
there exist positive constants $\ep_1<\ep_0$ and $C_0$ such that for each $0<\ep<\ep_1$, there is a viscosity solution  $u^\ep$ to :
$$\mbox{\eqref{eq1.1}, \eqref{eq1.2}, and either}\ \   \eqref{eq.lbc.2} \mbox{ with \eqref{eq5.2111} or \eqref{eq.lbc.3} }, \mbox{on} \ \ \partial_L\Omega_\varepsilon.$$
 Furthermore any solution $u^\ep$ will satisfy 
$\sup_{\ol{\gO_\ep}}|u^\ep|\leq C_0$. 
\end{proposition}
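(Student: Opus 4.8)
The plan is to construct, for every sufficiently small $\ep$, a pair of strict classical sub/supersolutions $\underline u^\ep\le\bar u^\ep$ of \problem belonging to $C^2(\ol{\gO_\ep})$ whose sup norms are bounded by a constant $C_0$ independent of $\ep$, and then to deduce the $\ep$-independent bound by a one-sided maximum principle and the existence by Perron's method. All the substance is in the uniform-in-$\ep$ construction of the barriers; everything else is soft.

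For the supersolution I would look for $\bar u^\ep$ of the form
\[
\bar u^\ep(x,y)=M-L\,\rho(x)+v(x)\,y+\tfrac12(y-\ep g^-(x))^2W^+(x)+\tfrac12(y-\ep g^+(x))^2W^-(x),
\]
with $\rho\in C^2(\ol\gO)$ vanishing on $\pl\gO$ and $D\rho=-\nu$ there, $L>0$ fixed, $v,W^\pm\in C^1(\ol\gO)$ to be chosen, and $M>0$ fixed at the end. Since $\gO_\ep$ is thin, the last three terms are $O(\ep)$ in $C^0(\ol{\gO_\ep})$ while $D\bar u^\ep,D^2\bar u^\ep$ stay bounded on $\ol{\gO_\ep}$ uniformly in $\ep$; by (H1) this gives $F(D^2\bar u^\ep,D\bar u^\ep,\bar u^\ep,\cdot)\ge\alpha M-C_1$ in $\gO_\ep$ with $C_1$ independent of $\ep$, so the interior inequality holds strictly once $M$ is large. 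On $\pl_\L\gO_\ep$ one has $D_x\bar u^\ep=L\nu+O(\ep)$ and, in the oblique case, $\gamma_2=O(\ep)$ there by \eqref{eq5.2111}, hence $\gamma\cdot D\bar u^\ep=L\,\gamma_1\cdot\nu+O(\ep)>\beta$ for $L$ large since $\gamma_1\cdot\nu>0$; in the Dirichlet case $\bar u^\ep=M+O(\ep)>\beta$ there trivially. The delicate requirement is the pair of oblique conditions on top and bottom: inserting \eqref{eq1.6} and using \eqref{oblique_TB}, $\gamma^+\cdot D\bar u^\ep\ge\beta^+$ on $\pl_\T\gO_\ep$ forces $v\gtrsim\beta^+(x,0)+L\,\gamma_1^+(x,0)\cdot D\rho$ near $y=0$, while $\gamma^-\cdot D\bar u^\ep\ge\beta^-$ on $\pl_\B\gO_\ep$ forces $v\lesssim-\beta^-(x,0)-L\,\gamma_1^-(x,0)\cdot D\rho$, the mismatch in each being $O(\ep)$. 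These two constraints are compatible precisely because $\beta^+(x,0)+\beta^-(x,0)=0$ and $\gamma_1^+(x,0)+\gamma_1^-(x,0)=0$ by \eqref{eq1.4}; one then sets $v:=\beta_o+L\,\gamma_o\cdot D\rho$ and chooses $W^\pm$ by the very computation of Section~2 (with $Du^0$ there replaced by $-L\,D\rho$) so that both oblique inequalities hold with a strict margin of order $\ep$ on all of $\pl_\T\gO_\ep\cup\pl_\B\gO_\ep$, corners included. The subsolution $\underline u^\ep$ is built by replacing $(M,L)$ with $(-M,-L)$ and reversing the inequalities. (The example after Theorem~\ref{thm1} shows this compatibility, hence \eqref{eq1.4}, cannot be dispensed with.)

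With the barriers at hand, the a priori bound needs no comparison principle. If $u$ is any viscosity subsolution of \problem, with upper semicontinuous envelope $u^*$, and $M_0:=\max_{\ol{\gO_\ep}}(u^*-\bar u^\ep)>0$ is attained at $\hat z$, then $\phi:=\bar u^\ep+M_0\in C^2(\ol{\gO_\ep})$ touches $u^*$ from above at $\hat z$. In the subsolution inequality at $\hat z$ the PDE alternative is impossible, because by (H1) and the strict interior inequality $F(D^2\bar u^\ep(\hat z),D\bar u^\ep(\hat z),u^*(\hat z),\hat z)\ge F(D^2\bar u^\ep(\hat z),D\bar u^\ep(\hat z),\bar u^\ep(\hat z),\hat z)+\alpha M_0>0$; and each boundary alternative — oblique on $\pl_\T\gO_\ep$, $\pl_\B\gO_\ep$ or $\pl_\L\gO_\ep$, or Dirichlet on $\pl_\L\gO_\ep$ — is impossible because $\bar u^\ep$ satisfies it strictly, all of the alternatives available at a corner failing at once. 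Hence $M_0\le0$, i.e. $u\le\bar u^\ep\le C_0$, and symmetrically $u\ge\underline u^\ep\ge-C_0$; thus $\sup_{\ol{\gO_\ep}}|u|\le C_0$, in particular for every solution.

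Existence then follows by Perron's method: the family of viscosity subsolutions $w$ with $\underline u^\ep\le w\le\bar u^\ep$ is nonempty and stable under pointwise suprema, its supremum $u^\ep$ has a subsolution for its upper semicontinuous envelope and a supersolution for its lower semicontinuous envelope by the usual bump argument, and the corners of $\gO_\ep$ together with the ``either/or at corners'' form of the definition are handled as in \cite{BBI}. The one technical point this requires is that the strict classical barriers just built are genuine viscosity sub/supersolutions up to $\pl\gO_\ep$: at a boundary touching point the test gradient is $D\bar u^\ep$ minus a nonnegative multiple of the outward normal, and one must still recover either the pertinent oblique inequality or the PDE, which is where the obliqueness \eqref{oblique_TB}, \eqref{eq5.2111} enters, via a defining-function straightening of the touching. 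I expect this verification, together with the uniform construction of barriers reconciling the two oppositely oriented oblique conditions on top and bottom, to be the main obstacle; choosing $\ep_1<\ep_0$ small enough for all the above estimates concludes.
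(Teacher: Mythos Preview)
Your strategy---build $C^2$ strict barriers guided by the formal expansion, get the $\ep$-uniform bound by a one-sided maximum principle, then run Perron---is exactly the paper's.  Two points need repair.

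\textbf{Regularity of the barrier.}  Under \eqref{eq1.5} one only has $\beta_o,\gamma_o\in C^1(\ol\gO)$, so your $v=\beta_o+L\,\gamma_o\cdot D\rho$ is merely $C^1$; and under \eqref{eq1.6} the coefficients $k^\pm,l^\pm$ are only continuous, so the $W^\pm$ borrowed from Section~2 are only $C^0$.  Hence $\bar u^\ep\notin C^2(\ol{\gO_\ep})$ as written, and the assertion that $D^2\bar u^\ep$ stays bounded (needed for the interior inequality via (H1)) is not justified.  The paper fixes this by mollifying: replace $v$ by $v_\ep\in C^\infty$ with $v_\ep=v+O(\ep)$, $Dv_\ep=Dv+o(1)$, $D^2v_\ep=o(\ep^{-1})$ (Lemma~\ref{lem1} and Remark~\ref{rem1}), so that $y\,D^2v_\ep=O(\ep)\cdot o(\ep^{-1})=o(1)$ on $\ol{\gO_\ep}$.

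\textbf{Strict margin on top/bottom, and a simpler barrier.}  The Section~2 choice of $W^\pm$ is tuned so that the oblique conditions are met with error $o(\ep)$, not with a strict margin; you still need an extra term to force a definite sign.  The paper avoids this altogether: for the existence barrier it drops the $W^\pm$ corrector entirely and uses instead a single quadratic $\tfrac{\Lambda}{2}(y-\ep h(x))^2$ with $g^-<h<g^+$.  The $v$-choice alone already makes $(\gamma^\pm\cdot D\bar\psi_\ep-\beta^\pm)|_{\pl_{\T/\B}\gO_\ep}=O(\ep)$, and the $\Lambda$-term contributes $\pm\ep\Lambda(g^\pm-h)$, so the margin is $\ep\Lambda\delta_0-C_1\ep>0$ once $\Lambda$ is large.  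This is both simpler and sidesteps the extra regularity you would need for $W^\pm$.

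Your flagged ``main obstacle''---that the classical strict barriers are viscosity barriers up to the corners---is handled in the paper by a concrete geometric fact (Lemma~\ref{maximum}): for small $\ep$ there is $\theta>0$ such that at any maximum point $c\in\pl_\L\gO_\ep\cap\pl_\T\gO_\ep$ of a $C^1$ function $\psi$ one has $(\gamma+\theta\gamma^+)\cdot D\psi(c)\ge 0$, which forces at least one of the two oblique alternatives at the corner.  In the Dirichlet case no such lemma is needed (Proposition~\ref{equivalenzaD}).
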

Before starting the proof we need a few technical results.

\begin{lemma}\label{lem1} Let $u\in C(\ol\gO,\R)$.
There is a family $\{u_\ep \mid \ep>0\}$ 
of functions in $C^\infty(\ol\gO,\R)$ such that as $\ep \to 0^+$, 
\[
u_\ep=u+o(1) \ \ \AND \ \ \ Du^\ep= o(\ep^{-1}). 
\]
\end{lemma}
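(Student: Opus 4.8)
The plan is to build $u_\ep$ by mollifying a continuous extension of $u$ at a length scale $\gd(\ep)$ that tends to $0$ but stays much larger than $\ep$; the condition $\gd(\ep)\gg\ep$ is exactly what forces the gradient bound $Du_\ep=o(\ep^{-1})$.

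\emph{Step 1: extend $u$.} By the Tietze extension theorem there is $\tilde u\in C(\R^N,\R)$ with $\tilde u=u$ on $\ol\gO$ (no property of $\gO$ beyond boundedness is needed; alternatively a reflection/partition-of-unity argument using the $C^1$ boundary does the job). Since $\ol\gO$ is compact, so is $K:=\{x\in\R^N\mid \dist(x,\ol\gO)\le 1\}$, and $\tilde u$ is uniformly continuous on $K$; write $\go(s):=\sup\{|\tilde u(x)-\tilde u(x')|\mid x,x'\in K,\ |x-x'|\le s\}$, so $\go(s)\to 0$ as $s\to 0^+$, and set $M_0:=\sup_K|\tilde u|<\infty$.

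\emph{Step 2: mollify and estimate.} Fix a standard mollifier $\rho\in C^\infty_c(\R^N)$ with $\rho\ge 0$, $\supp\rho\subset B_1$, $\int\rho=1$, and for $\gd\in(0,1)$ put $\rho_\gd(z):=\gd^{-N}\rho(z/\gd)$, $u^\gd:=\tilde u*\rho_\gd\in C^\infty(\R^N,\R)$. For $x\in\ol\gO$, using $\int\rho_\gd=1$ and that $x,\,x-z\in K$ whenever $|z|\le\gd$,
\[
|u^\gd(x)-u(x)|=\Big|\int_{B_\gd}\rho_\gd(z)\big(\tilde u(x-z)-\tilde u(x)\big)\,dz\Big|\le\go(\gd).
\]
Also $Du^\gd=\tilde u*D\rho_\gd$, and since $\supp\rho_\gd\subset B_\gd\subset B_1$ only values of $\tilde u$ on $K$ enter, so
\[
\sup_{\ol\gO}|Du^\gd|\le M_0\int_{\R^N}|D\rho_\gd|=\frac{C_\rho}{\gd}\,M_0=:\frac{M}{\gd},\qquad C_\rho:=\int_{\R^N}|D\rho|.
\]

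\emph{Step 3: calibrate.} Choose $\gd(\ep):=\sqrt\ep$ for $0<\ep<1$ (any $\gd(\ep)$ with $\gd(\ep)\to0$ and $\ep/\gd(\ep)\to0$ works) and let $u_\ep$ be the restriction to $\ol\gO$ of $u^{\gd(\ep)}$; this is the restriction of a function in $C^\infty(\R^N,\R)$, hence lies in $C^\infty(\ol\gO,\R)$ in the sense used in the paper. Then $\sup_{\ol\gO}|u_\ep-u|\le\go(\sqrt\ep)\to0$, i.e.\ $u_\ep=u+o(1)$; and $\ep\sup_{\ol\gO}|Du_\ep|\le M\sqrt\ep\to0$, i.e.\ $Du_\ep=o(\ep^{-1})$.

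There is essentially no real obstacle here: this is routine mollification. The only point deserving a remark is Step 1 — getting a globally defined continuous (hence, on the compact neighborhood $K$, bounded and uniformly continuous) extension so that the convolution is defined and controlled near $\ol\gO$ — and Tietze settles it at once. The design choice $\gd(\ep)=\sqrt\ep$ (rather than $\gd(\ep)=\ep$) is what is dictated by the required rate $o(\ep^{-1})$; everything else is the triangle inequality and the standard $L^1$ bound $\|D\rho_\gd\|_{L^1}=C_\rho/\gd$.
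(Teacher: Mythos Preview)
Your proof is correct. Both you and the paper mollify a continuous extension of $u$, but the key step differs. The paper mollifies at scale $\ep$ and exploits the cancellation $\int D\gl_\ep=0$ to write
\[
Du_\ep(x)=\int\bigl(u(x-y)-u(x)\bigr)\,D\gl_\ep(y)\,dy,
\]
which gives $|Du_\ep|\le C\,m(\ep)/\ep=o(\ep^{-1})$ directly, with $m$ the modulus of continuity. You instead keep the crude bound $|Du^\gd|\le M_0\|D\rho_\gd\|_{L^1}=C/\gd$ and compensate by choosing the coarser scale $\gd(\ep)=\sqrt\ep$. Both are perfectly valid for the lemma as stated. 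The paper's cancellation argument has the advantage that, with the single natural scale $\ep$, it immediately extends to the $C^k$ statement in Remark~\ref{rem1}: for $|\gs|<k$ one gets $D^\gs u_\ep=D^\gs u+O(\ep)$ and for $|\gs|>k$ one gets $D^\gs u_\ep=o(\ep^{k-|\gs|})$. Your rescaling $\gd=\sqrt\ep$ would only yield $O(\sqrt\ep)$ in the first case, so reproducing Remark~\ref{rem1} by your route would require either adopting the subtraction trick anyway or a more delicate choice of scale.
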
 

\bproof By extending the domain of definition of $u$, we may assume that $u$ is defined,
bounded, uniformly continuous in $\R^N$. Let $\gl \in C^\infty(\R^N,\R)$ be a standard mollification 
kernel, with $\supp \gl$ being contained in the unit ball $B_1$ centered at the origin.  Let 
$\gl_\ep$ be defined by $\gl_\ep(x)=\ep^{-N}\gl (x/\ep)$. Let $m$ be the modulus of continuity 
of $u$. Set $u_\ep=u*\gl_\ep$ for $\ep>0$, and observe that
\[\bald
|u_\ep(x)-u(x)|&\leq\int_{B_\ep}\gl_\ep(y)|u(x-y)-u(x)|dy\leq m(\ep)\int_{B_\ep}\gl_\ep(y)dy=m(\ep),
\\ |Du_\ep(x)|&=\Big|\int_{B_\ep}u(x-y)D\gl_\ep(y)dy\Big|=\Big|\int_{B_\ep}(u(x-y)-u(x))D\gl_\ep(y)dy\Big|
\\&
\leq m(\ep)\int_{B_\ep}|D\gl(y/\ep)| \ep^{-N-1}dy=m(\ep)e^{-1}\int_{B_1}|D\gl(y)|dy. s
\eald
\]
\eproof

\begin{remark}\label{rem1} The above proof extends to a general case 
where $u\in C^k(\ol\gO,\R)$,
with nonnegative integer $k$. Let $u_\ep$ be the function defined as above. Fix  any $j\in\N$.  
The conclusion is: for multi-index $\gs=(\gs_1,\ldots,\gs_N)$, if $|\gs|< k$, then 
\[
D^\gs u_\ep=D^\gs u+O(\ep),
\]
if $|\gs|=k$, then 
\[
D^\gs u_\ep=D^\gs u+o(1)
\]
and, if $k<|\gs|\leq k+j$, then 
\[
D^\gs u_\ep=o(\ep^{k-|\gs|}). 
\]
Notice that if $|\gs|<k$, then $D^\gs u$ is Lipschitz continuous on $\gO$. 
\end{remark}

It is well known that, to obtain an existence result like e.g. 
Proposition \ref{exisnew}, one can apply the Perron method to the boundary value problem  \problem  (see, for instance,  \cite[Remark 4.5]{CIL}).  A crucial property in this construction is  that classical solutions (twice continuously differentiable solutions in the pointwise sense) are also solution in the viscosity sense.

A result in this direction is the following. 

\begin{proposition}\label{equivalenza}
Assume  $g^\pm\in C^1(\ol\gO,\R)$, (H1), (H2) and \eqref{eq5.2111}.
There exists $\ep_*\in(0,\ep_0)$ such that for  $0<\ep<\ep_*$, if $u\in C^2(\ol{\gO_\ep})$ is a classical sub (resp., super) solution of problem \oblique, 

then it is a  viscosity sub (resp., super) solution of \oblique. \end{proposition}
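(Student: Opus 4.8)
The plan is to verify the viscosity sub/supersolution conditions at each type of boundary point by testing against a smooth function $\phi$ that touches $u$ from above (for the subsolution case). Let $\hat z = (\hat x,\hat y)\in\ol{\gO_\ep}$ be a maximum point of $u-\phi$. If $\hat z\in\gO_\ep$, the interior first and second order conditions $D\phi(\hat z)=Du(\hat z)$, $D^2\phi(\hat z)\le D^2 u(\hat z)$ hold, and degenerate ellipticity (H1) together with $u$ being a classical subsolution gives \eqref{sub1} directly. The genuinely delicate cases are the boundary ones, and among these the corner set $\pl_\L\gO_\ep\cap(\pl_\B\gO_\ep\cup\pl_\T\gO_\ep)$ is the heart of the matter.

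First I would treat a point $\hat z\in\pl_\T\gO_\ep\stm\pl_\L\gO_\ep$ (the bottom case being symmetric, and the purely lateral case $\hat z\in\pl_\L\gO_\ep\stm(\pl_\B\cup\pl_\T)$ being standard). Here $\hat z$ lies on the graph $y=\ep g^+(x)$ over an interior point $\hat x\in\gO$. The condition to check is that \eqref{sub1} holds or $\gamma^+(\hat z)\cdot D\phi(\hat z)\le\beta^+(\hat z)$. Suppose the oblique inequality fails, i.e. $\gamma^+(\hat z)\cdot D\phi(\hat z)>\beta^+(\hat z)$. The classical subsolution $u$ satisfies $\gamma^+\cdot Du(\hat z)\le\beta^+(\hat z)$ pointwise, so $\gamma^+(\hat z)\cdot(D\phi-Du)(\hat z)>0$. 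On the other hand, $u-\phi$ attains an interior-in-$\ol\gO$ maximum of its trace along the top graph, which constrains the tangential components of $D(\phi-u)$ to vanish at $\hat z$; the only free component is the one transverse to the graph, and obliqueness \eqref{oblique_TB} (for $\ep<\ep_*$) says precisely that $\gamma^+$ has a strictly positive component in the inward conormal direction. Since $u-\phi$ has an interior maximum from the domain side, the inward directional derivative of $u-\phi$ is $\le 0$, forcing $\gamma^+(\hat z)\cdot(D\phi-Du)(\hat z)\le 0$, a contradiction. Hence either the oblique inequality holds or we are forced into the interior-type alternative, which yields \eqref{sub1} via (H1) as before.

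The main obstacle is the corner set, say $\hat z\in\pl_\L\gO_\ep\cap\pl_\T\gO_\ep$, where the required conclusion is that one of \eqref{sub1}, \eqref{subL}, or \eqref{subT} holds. The difficulty is that at a corner $u-\phi$ can have a maximum without any of the three one-sided derivative conditions being individually easy to extract, because the admissible inward cone is two-dimensional (spanned by the inward lateral conormal and the inward top conormal). The strategy I would use: argue by contradiction assuming all three fail. Then $\gamma(\hat z)\cdot D\phi(\hat z)>\beta(\hat z)$, $\gamma^+(\hat z)\cdot D\phi(\hat z)>\beta^+(\hat z)$, and \eqref{sub1} fails; combined with the classical subsolution inequalities for $u$ this gives $\gamma(\hat z)\cdot q>0$ and $\gamma^+(\hat z)\cdot q>0$ where $q:=D\phi(\hat z)-Du(\hat z)$. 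But obliqueness of both $\gamma$ (via \eqref{eq5.2111}) and $\gamma^+$ (via \eqref{oblique_TB}) means each defines a closed half-space through the origin whose interior normal points into the domain; one checks — using that $\gamma_2^+=1>0$ and $\gamma_2(\hat x,\hat y)$ has the right sign near the top, i.e. $\gamma_2(\hat x,\ep g^+(\hat x))$, controlled by continuity and $\gamma_2(x,0)=0$ together with smallness of $\ep$ — that the intersection of these two open half-spaces is contained in the open cone of directions pointing strictly out of $\ol{\gO_\ep}$ at $\hat z$. Since $u-\phi$ has a maximum over $\ol{\gO_\ep}$ at $\hat z$, its directional derivative in every admissible inward direction is $\le 0$; writing an inward direction as a nonnegative combination that pairs positively against both $\gamma(\hat z)$ and $\gamma^+(\hat z)$, we get $\nabla(\phi-u)(\hat z)$ evaluated on it must be both $\ge 0$ (from $\gamma,\gamma^+$ positivity of $q$) and $\le 0$ (from the maximum), forcing $q$ to lie on the common boundary; a perturbation/limiting argument then contradicts the strict inequalities. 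The only subtlety to nail down carefully is that for $\ep$ small the geometric cone condition at the corner genuinely follows from the two obliqueness hypotheses; choosing $\ep_*$ so that \eqref{oblique_TB} holds and so that $|\gamma_2(x,y)|$ is small enough on $\pl_\T\gO_\ep$ and $\pl_\B\gO_\ep$ relative to the lateral obliqueness gap $\gamma_1\cdot\nu>0$ is exactly what makes this work, and this is where I expect to spend the most effort.
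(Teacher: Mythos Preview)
Your overall strategy matches the paper's, and the non-corner boundary cases are handled correctly and in the same way. The corner argument, however, is not correctly executed as written.

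The claim that ``the intersection of these two open half-spaces is contained in the open cone of directions pointing strictly out of $\ol{\gO_\ep}$'' is false: that intersection is an open set of full dimension in $\R^{N+1}$, and it certainly meets the tangent cone of $\ol{\gO_\ep}$ at $\hat z$ (e.g.\ along directions tangent to the corner edge). What the maximum actually gives you is not that $q=D\phi(\hat z)-Du(\hat z)$ avoids the tangent cone, but that $-q$ lies in the \emph{normal} cone, i.e.\ $q=-a\nu_\L-b\nu_\T$ with $a,b\ge 0$. From this alone one cannot rule out $\gamma\cdot q>0$ and $\gamma^+\cdot q>0$ separately, because the cross terms $\gamma\cdot\nu_\T$ and $\gamma^+\cdot\nu_\L$ have no prescribed sign. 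Your subsequent sentence about ``an inward direction as a nonnegative combination that pairs positively against both $\gamma$ and $\gamma^+$'' is too vague to carry the argument.

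The paper's fix is to isolate one clean geometric fact as a separate lemma: there exist $\theta>0$ and $\ep_*>0$ such that for $0<\ep<\ep_*$ and any corner point $c\in\pl_\L\gO_\ep\cap\pl_\T\gO_\ep$, the vector $-(\gamma+\theta\gamma^+)$ points into $\gO_\ep$ (and symmetrically with $\gamma^-$ on $\pl_\B$). This is proved by checking the two sign conditions $(\gamma+\theta\gamma^+)\cdot D\rho>0$ and $(\gamma+\theta\gamma^+)\cdot D\rho^+>0$ for defining functions $\rho,\rho^+$ of the lateral and top faces; the choice of small $\theta$ handles the first (using $\gamma_1\cdot\nu>0$), and the choice of small $\ep_*$ handles the second (using $\gamma_2(x,0)=0$ and $\gamma_2^+=1$) --- exactly the ingredients you name in your last paragraph. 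Once this is established, the maximum of $u-\phi$ at $c$ gives $(\gamma+\theta\gamma^+)\cdot D(u-\phi)(c)\ge 0$, hence $\gamma\cdot D(u-\phi)(c)\ge 0$ or $\gamma^+\cdot D(u-\phi)(c)\ge 0$, and combining with the classical subsolution inequalities yields \eqref{subL} or \eqref{subT}. Note in particular that you never need to assume \eqref{sub1} fails at the corner: one of the two boundary inequalities is always recovered.
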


This proposition is a straightforward consequence  of the following Lemma (see also \cite[Remark 4]{BBI}).

\begin{lemma}\label{maximum}
Assume  $g^\pm\in C^1(\ol\gO,\R)$, (H2),  and \eqref{eq5.2111}.
There exist constants $\ep_*\in(0,\ep_0)$ and $\gth>0$ such that if $0<\ep<\ep_*$, $\psi\in C^1(\ol{\gO_\ep})$, and $\psi$ takes a maximum at $c\in \pl_\L \gO_\ep \cap \pl_\T \gO_\ep$ 
(resp., $c\in \pl_\L \gO_\ep \cap \pl_\B \gO_\ep$), then 
\[
(\gamma+\gth\gamma^+)\cdot D\psi(c)\geq 0 
\quad (\text{ resp., } (\gamma+\gth\gamma^-)\cdot D\psi(c)\geq 0\ ).
\]
\end{lemma}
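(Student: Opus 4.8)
The plan is to exploit the corner geometry at a point $c\in \pl_\L\gO_\ep\cap\pl_\T\gO_\ep$. Such a point has the form $c=(\hat x,\ep g^+(\hat x))$ with $\hat x\in\pl\gO$, and locally $\ol{\gO_\ep}$ near $c$ is the intersection of two one-sided regions: the region below the graph $y=\ep g^+(x)$ (whose inward conormal information is carried by the vector $n_\T:=(-\ep Dg^+(\hat x),1)$, which by \eqref{oblique_TB} pairs positively with $(\gamma_1^+,1)=\gamma^+(c)$) and the region on the inner side of the lateral wall (whose inward normal is $(-\nu(\hat x),0)$, pairing positively with $\gamma(c)$ by the sign condition $\gamma_1\cdot\nu>0$ in \eqref{eq5.2111}). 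Since $\psi\in C^1(\ol{\gO_\ep})$ attains a maximum at $c$ over $\ol{\gO_\ep}$, the vector $-D\psi(c)$ must lie in the (closed, convex) normal cone to $\ol{\gO_\ep}$ at $c$, which — once $\ep$ is small enough that the boundary is genuinely a transversal intersection of the two smooth pieces — is exactly the cone spanned by the two inward normals $n_\T$ and $(-\nu(\hat x),0)$ with nonnegative coefficients. That is, $D\psi(c)=-s\,n_\T - t\,(-\nu(\hat x),0)$ for some $s,t\ge 0$.

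First I would make the cone statement precise and quantitative. The transversality is uniform: by \eqref{oblique_TB} and compactness there is $\delta_0>0$ with $-\ep Dg^+\cdot\gamma_1^++1\ge \delta_0$ on $\pl_\T\gO_\ep$ for all $\ep\in(0,\ep_0)$, and $\gamma_1\cdot\nu\ge \delta_0$ on $\pl\gO\tim[-1,1]$ after shrinking; also the two boundary pieces meet at an angle bounded away from $0$ and $\pi$, uniformly in $\ep$ and in the corner point, because the lateral wall is "vertical" (its normal has zero $y$-component) while $n_\T$ has $y$-component $1$ and bounded $x$-component. So the normal cone is a uniformly "fat" two-dimensional cone, and the decomposition $D\psi(c)=s\,n_\T+t\,(-\nu(\hat x),0)$ holds with $s,t\ge 0$. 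Then I would simply compute, for a constant $\gth>0$ to be chosen,
\[
(\gamma+\gth\gamma^+)(c)\cdot D\psi(c)
= s\,\gamma(c)\cdot n_\T + t\,\gamma(c)\cdot(-\nu,0)
+ \gth\big(s\,\gamma^+(c)\cdot n_\T + t\,\gamma^+(c)\cdot(-\nu,0)\big).
\]
Here $\gamma(c)\cdot(-\nu,0)=-\gamma_1\cdot\nu\le -\delta_0$ (the only "bad" term, possibly large negative if $t$ is large), $\gamma^+(c)\cdot n_\T = -\ep Dg^+\cdot\gamma_1^++1\ge \delta_0$, while $\gamma(c)\cdot n_\T$ and $\gamma^+(c)\cdot(-\nu,0)$ are just bounded. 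The key observation is that the genuinely dangerous coefficient, $t$, multiplies $\gamma^+(c)\cdot(-\nu,0)$, which is bounded, so a large $t$ is harmless once $\gth$ is large; more carefully, grouping the $t$-terms gives $t\big(\gamma(c)\cdot(-\nu,0)+\gth\,\gamma^+(c)\cdot(-\nu,0)\big)$, and here I instead note that the vertical-wall direction is the same for $\gamma$ and $\gamma^+$, so I should rather split along the two cone generators and choose $\gth$ large enough that each generator contributes nonnegatively: $(\gamma+\gth\gamma^+)(c)\cdot n_\T\ge \gth\delta_0 - C \ge 0$ and $(\gamma+\gth\gamma^+)(c)\cdot(-\nu,0) = -\gamma_1\cdot\nu - \gth\,\gamma_1^+\cdot\nu$, which is $\le 0$ — so this naive splitting fails on the second generator, and one must genuinely use that $\gamma_1^+\cdot\nu>0$ too, giving $-\gamma_1\cdot\nu-\gth\gamma_1^+\cdot\nu<0$: still negative.

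This shows where the real content sits, and how the argument must actually go: one uses \emph{both} that $\gamma$ and $\gamma^+$ point strictly into $\gO_\ep$ across the lateral wall (so $t$ cannot be forced to be positive when $D\psi(c)$ has no inward component there — in fact at a maximum the normal-cone coefficients are constrained, and the obliqueness of $\gamma^+$ on the \emph{top} is what rescues the $n_\T$-term). The clean way: since both $\gamma(c)$ and $\gamma^+(c)$ satisfy $\gamma_1\cdot\nu\ge\delta_0$ and $\gamma^+(c)\cdot n_\T\ge\delta_0$, write $w=D\psi(c)=s\,n_\T+t\,(-\nu,0)$, and observe $\gamma(c)\cdot w \ge -C_1 t + ( - C_2 s)$ need not be controlled, but $\gamma^+(c)\cdot w = s(\gamma^+\cdot n_\T) + t(\gamma^+\cdot(-\nu,0)) \ge \delta_0 s - C_3 t$; meanwhile the maximum condition against the wall forces a relation — and here is the true mechanism — the \emph{outward} lateral normal being $(\nu,0)$, a $C^1$ maximum on a corner with the wall only constrains $D\psi(c)$ up to the cone, but testing $\psi$ along the top face alone (a $C^1$ maximum of $\psi$ restricted to $\pl_\T\gO_\ep$ near the corner is not available because $c$ is a boundary point of that face). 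I would therefore follow the standard device (as in \cite[Remark 4]{BBI}): approximate the corner by smoothing $\ol{\gO_\ep}$ or use a penalization that forces the contact point into the interior of one face, reducing to the one-face obliqueness conditions, and then pass to the limit. The main obstacle is precisely this: making the normal-cone/corner argument rigorous for merely $C^1$ test functions at the non-smooth point $c$, and choosing $\gth$ uniformly in $\ep$ and in $c$; the inequalities \eqref{oblique_TB} and \eqref{eq5.2111}, together with compactness of $\pl\gO$, are exactly what make the choice of $\gth$ possible and independent of $\ep<\ep_*$.
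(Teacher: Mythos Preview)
Your normal-cone setup is essentially the paper's approach, but you miss the key ingredient and consequently fail to close the argument. The paper does exactly the computation you start: it shows that the single vector $\gamma+\gth\gamma^+$ points strictly into $\gO_\ep$ from the corner by checking that it pairs positively with \emph{both} outward normals, namely $D\rho = (|D\rho|\,\nu,0)$ for the lateral wall and $D\rho^+=(-\ep Dg^+,1)$ for the top. (This is equivalent to your cone decomposition once signs are straightened out: at a maximum $D\psi(c)$ lies in the \emph{outward} normal cone, i.e.\ $D\psi(c)=s(-\ep Dg^+,1)+t(\nu,0)$ with $s,t\ge 0$; your sign on the lateral generator is flipped.)

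The substantive gap is that you try to take $\gth$ large, see that this ruins the lateral pairing $(\gamma_1+\gth\gamma_1^+)\cdot\nu$, and then retreat to a smoothing device. In fact $\gth$ must be taken \emph{small}: since $\gamma_1\cdot\nu>0$ uniformly, a small $\gth$ keeps $(\gamma_1+\gth\gamma_1^+)\cdot\nu>0$. What makes the \emph{top} pairing work is the part of \eqref{eq5.2111} you never invoke, namely $\gamma_2(x,0)=0$. At $c=(\hat x,\ep g^+(\hat x))$ this gives $|\gamma_2(c)|\le \gth/2$ once $\ep$ is small, so
\[
(\gamma+\gth\gamma^+)\cdot(-\ep Dg^+,1)=\gamma_2(c)+\gth-\ep(\gamma_1+\gth\gamma_1^+)\cdot Dg^+ \ge \tfrac{\gth}{2}-C\ep>0
\]
for $\ep<\ep_*$ small. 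No smoothing or penalization is needed; the two pairings finish the proof directly, and compactness makes $\gth,\ep_*$ uniform.
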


We present a brief proof of Proposition \ref{equivalenza}, based on Lemma \ref{maximum}, for completeness.

\bproof[Proof of Proposition \ref{equivalenza}] We only consider the claim about the 
subsolution property. Let $u\in C^2(\ol{\gO_\ep})$ be a classical subsolution of \oblique. 
Let $\phi\in C^2(\ol{\gO_\ep})$ and $c\in \ol{\gO_\ep}$. Assume that $u-\phi$ takes a
maximum at $c$. If $c\in\gO_\ep$, then the degenerate ellipticity of $F$, a consequence of (H1), 
assures that $F(D^2\phi(c),D\phi(c),u(c),c)\leq F(D^2u(c),Du(c),u(c),c)$, which yields
$F(D^2\phi(c),D\phi(c),u(c),c)\leq 0$. If $c\in\pl_\T\gO_\ep\stm\pl_\L\gO_\ep$, 
then, by \eqref{oblique_TB}, $\gamma^+$ is oblique to $\pl_\T\gO_\ep$, 
which implies that $\gamma^+\cdot D(\phi-u)(c) \leq 0$, and hence $\gamma^+\cdot D\phi(c)\leq \beta^+(c)$. Similarly, $\gamma^-$ and  $\gamma$ are oblique to $\pl_\B\gO_\ep$ at $c\in\pl_\B\gO_\ep\stm \pl_\L\gO_\ep$ and to $\pl_\L\gO_\ep$ at $c\in\pl_\L\gO_\ep\stm(\pl_\T\gO_\ep\cup\pl_\B\gO_\ep)$, respectively, and hence, if $c\in \pl_\B\gO_\ep\stm\pl_\L\gO_\ep$, then $\gamma^-\cdot D\phi(c)\leq 
\beta^-(c)$
and if $c\in \pl_\L\gO_\ep\stm(\pl_\T\gO_\ep\cup\pl_\B\gO_\ep)$, then $\gamma\cdot D\phi(c)\leq \beta(c)$. 
Finally, we consider the case $c\in (\pl_\T\gO_\ep\cup \pl_\B\gO_\ep)\cap\pl_\L\gO_\ep$. 
If $c\in\pl_\T\gO_\ep\cap\pl_\L\gO_\ep$, then, by Lemma \ref{maximum}, we have
$(\gamma+\gth\gamma^+)\cdot D(u-\phi)(c)\geq 0$ for some $\gth>0$. This
implies either $\gamma\cdot D(u-\phi)(c)\geq 0$ or $\gamma^+\cdot D(u-\phi)(c)\geq 0$. 
Here, the former and the latter yield  
$\gamma\cdot D\phi(c)\leq \beta(c)$ and $\gamma^+\cdot D\phi(c)\leq \beta^+(c)$, respectively.
If, instead, $c\in\pl_\L\gO_\ep\cap \pl_\B\gO_\ep$, then we have
$(\gamma+\gth\gamma^-)\cdot D(u-\phi)(c)\geq 0$ for some $\gth>0$ by Lemma \ref{maximum}. 
Hence, similarly to the above, we have either $\gamma\cdot D\phi(c)\leq \beta(c)$ or 
$\gamma^-\cdot D\phi(c)\leq \beta^-(c)$. Thus, we conclude that $u$ is a viscosity subsolution 
of problem \oblique. 
\eproof

\bproof [Proof of Lemma \ref{maximum}]
We use the notation that, given a vector $\gz=(\bar\gz_1,\ldots,\bar\gz_{N+1})\in\R^{N+1}$, we write $\gz_1=(\bar\gz_1,\ldots,\bar\gz_N)\in\R^N$ and $\gz_2=\bar\gz_{N+1}\in\R$, 
so that $\gz=(\gz_1,\gz_2)$. 
 
\vspace{0.3cm}

It is enough to prove the following

{\bf Claim:} {\it There exist constants $\ep_*\in(0,\ep_0)$ and $\gth>0$ such that if 
$0<\ep<\ep_*$, then  
\begin{enumerate}\item[(i)] for any $c\in \pl_\L \gO_\ep    \cap \pl_\T \gO_\ep$, there is $t_c>0$ such that 
$c-t(\gamma+\gth\gamma^+) \in\gO_\ep$ for $0<t<t_c$, \vspace{-8pt}
\item[(ii)] for any $c\in \pl_\L \gO_\ep  \cap\pl_\B \gO_\ep$, there is $t_c>0$ such that 
$c-t(\gamma+\gth\gamma^-) \in\gO_\ep$ for $0<t<t_c$.
\end{enumerate}
}
\vspace{0.3cm}
 Choose $\rho\in C^1(\R^N,\R)$ so that
\[
\rho<0 \ \ \IN\gO,\quad \rho>0 \ \ \IN\R^N\stm\ol\gO, \ \ \AND \ \ D\rho\not=0 \ \ \ON\pl\gO. 
\]
We may regard $\rho$ as a function of $(x,y)\in\R^{N+1}$ independent of $y$.  
Define $\rho^\pm(x,y)=\pm(y-\ep g^\pm(x))$ \ on $\R^{N+1}$. We only need to show that 
for a choice of $\ep_*\in(0,\ep_0)$ and $\gth>0$, \begin{enumerate}
\item[(iii)] $(\gamma+\gth\gamma^+)\cdot D\rho(c)>0$ and 
$(\gamma+\gth\gamma^+)\cdot D\rho^+(c)>0$ \ \FOR $c\in \pl_\L \gO_\ep    \cap \pl_\T \gO_\ep   $ and $0<\ep<\ep_*$,  
\item[(iv)]$(\gamma+\gth\gamma^-)\cdot D\rho(c)>0$ and 
$(\gamma+\gth\gamma^-)\cdot D\rho^-(c)>0$ \  \FOR $c\in \pl_\L \gO_\ep  \cap\pl_\B \gO_\ep $ and $0<\ep<\ep_*$.
\end{enumerate}
Fix $\ep\in(0,\ep_0)$ and $\gth>0$.  
For $c=(c_1,c_2)\in  \pl_\L \gO_\ep    \cap \pl_\T \gO_\ep   $, we compute that
\beq\label{eq.go.3}\bald
(\gamma+\gth\gamma^+)\cdot D\rho(c)&=(\gamma_1+\gth \gamma_1^+,\gamma_2+\gth)\cdot(|D\rho(c_1)|\nu(c_1),0)
\\&= |D\rho(c_1)|(\gamma_1(c)\cdot\nu(c_1)+\gth \gamma^+_1(c)\cdot \nu(c_1)),
\eald
\eeq
and
\beq\label{eq.go.4}\bald
(\gamma+\gth\gamma^+)\cdot D\rho^+(c)
&=(\gamma+\gth\gamma^+)\cdot (-\ep Dg^+(c_1),1)
\\&=-\ep (\gamma_1+\gth\gamma_1^+)\cdot Dg^+(c_1)+(\gamma_2+\gth )(c).
\eald
\eeq
Similarly, for $c\in \pl_\L \gO_\ep  \cap\pl_\B \gO_\ep $, we compute that
\beq\label{eq.go.5}\bald
(\gamma+\gth\gamma^-)\cdot D\rho(c)&=(\gamma_1+\gth \gamma_1^-,\gamma_2-\gth)\cdot(|D\rho(c_1)|\nu(c_1),0)
\\&= |D\rho(c_1)|(\gamma_1(c)\cdot\nu(c_1)+\gth \gamma^-_1(c)\cdot \nu(c_1)),
\eald
\eeq
and
\beq\label{eq.go.6}\bald
(\gamma+\gth\gamma^-)\cdot D\rho^-(c)
&=(\gamma+\gth\gamma^-)\cdot (\ep Dg^-(c_1),-1)
\\&=\ep (\gamma_1+\gth\gamma_1^-)\cdot Dg^-(c_1)-(\gamma_2-\gth )(c).
\eald
\eeq
Since 
\[
\min_{(x,y)\in\pl\gO\tim[-1,1]}\gamma_1(x,y)\cdot \nu(x)>0 \ \ \text{ by \eqref{eq5.2111}},
\]
we may choose $\gth>0$ small enough so that 
\[
\min_{(x,y)\in \pl\gO\tim[-1,1]}\min\{(\gamma_1(x,y)+\gth \gamma^+_1(x,y))\cdot \nu(x), 
(\gamma_1(x,y)+\gth\gamma_1^-(x,y))\cdot\nu(x)\}>0. 
\]
Consequently, from \eqref{eq.go.3} and \eqref{eq.go.5}, we find that 
\[
(\gamma+\gth\gamma^+)\cdot D\rho(c)>0 \ \ \FOR c\in  \pl_\L \gO_\ep  \cap\pl_\T \gO_\ep  ,   
\]
and
\[
(\gamma+\gth\gamma^-)\cdot D\rho(c)>0 \ \ \FOR c\in \pl_\L \gO_\ep  \cap\pl_\B \gO_\ep.  
\]

Next, thanks to \eqref{eq5.2111}, we can choose $\gd\in(0,1)$ so that 
\[
\max_{\pl\gO\tim[-\gd,\gd]}|\gamma_2(c)|\leq 
\fr{\gth  
}{2}. 
\]
We choose $\ep_1\in(0,\ep_0)$ so that 
\[
\ep_1\max\{|g^+(x)|,|g^-(x)|\}\leq \gd \ \ \FOR x\in\ol\gO,
\]
which implies that for $0<\ep\leq\ep_1$, 
\[
\ol{\gO_\ep} \subset \ol\gO\tim[-\gd,\gd],
\]
and moreover,    
\[
|\gamma_2(c)|\leq \fr{\gth}{2} \ \ \ \FOR c\in   (\pl_\L \gO_\ep  \cap\pl_\T \gO_\ep) \cup \:   (\pl_\L \gO_\ep  \cap\pl_\B \gO_\ep) \ \AND\  0<\ep\leq\ep_1. 
\]
Now, from \eqref{eq.go.4} and \eqref{eq.go.6}, we find that for $0<\ep<\ep_1$, if $c\in \pl_\L \gO_\ep  \cap\pl_\T \gO_\ep  $, then
\[
(\gamma+\gth\gamma^+)\cdot D\rho^+(c)\geq \fr{\gth}{2}-\ep (\gamma_1+\gth\gamma_1^+)\cdot Dg^+(c_1)
\]
and if $c\in  \pl_\L \gO_\ep  \cap\pl_\B \gO_\ep$, 
\[
(\gamma+\gth\gamma^-)\cdot D\rho^-(c)\geq \fr{\gth}{2}+\ep (\gamma_1+\gth\gamma_1^-)\cdot Dg^-(c_1).
\]   
We select $\ep_*\in(0,\ep_1)$ so that 
\[
\ep_* \max_{c\in\pl\gO\tim[-\gd,\gd]} [(\gamma_1+\gth\gamma_1^+)(c)\cdot Dg^+(c_1)]^+
<\fr{\gth}{2},
\]
and
\[
\ep_* \max_{c\in\pl\gO\tim[-\gd,\gd]} [(\gamma_1+\gth\gamma_1^-)(c)\cdot Dg^-(c_1)]^-
<\fr{\gth}{2},
\]
to obtain for $0<\ep<\ep_*$,
\[
(\gamma+\gth\gamma^+)\cdot D\rho^+(c)>0 \ \ \FOR c\in  \pl_\L \gO_\ep  \cap\pl_\T \gO_\ep,  
\]
and 
\[
(\gamma+\gth\gamma^-)\cdot D\rho^-(c)>0 \ \ \FOR c\in  \pl_\L \gO_\ep  \cap\pl_\B \gO_\ep. 
\]
This proves the claim and hence the lemma.   
\eproof

In the case of the Dirichlet boundary condition \eqref{eq.lbc.2}, as the next proposition states, 
classical sub and super solutions are also viscosity sub and super solutions, respectively.
The corners of the domain do not matter.
  
\begin{proposition}\label{equivalenzaD} 
Assume  $g^\pm\in C^1(\ol\gO,\R)$, (H1), (H2).
Let $\ep\in(0,\ep_0)$.  If $u\in C^2(\ol{\gO_\ep})$ is a classical sub (resp., super) solution of problem \dirichlet, 
then it is a  viscosity sub (resp., super) solution of \dirichlet. 
\end{proposition}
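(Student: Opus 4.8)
The plan is to follow the scheme of the proof of Proposition \ref{equivalenza}, but to exploit the fact that the Dirichlet condition on $\pl_\L\gO_\ep$ is a pointwise constraint on $u$ itself: this makes the corners of $\gO_\ep$ harmless, so that Lemma \ref{maximum} (and any further smallness restriction on $\ep$) is not needed. I would treat only the subsolution claim, the supersolution claim being obtained by reversing all inequalities and replacing ``$\max$'' by ``$\min$''. Since $u\in C^2(\ol{\gO_\ep})$ is continuous, its upper semicontinuous envelope is $u$ itself, so it suffices to verify the test-function condition with $v$ replaced by $u$.

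So let $\phi\in C^2(\ol{\gO_\ep})$ and $\hat z=(\hat x,\hat y)\in\ol{\gO_\ep}$ be such that $u-\phi$ attains its maximum over $\ol{\gO_\ep}$ at $\hat z$, and split into cases according to the location of $\hat z$. If $\hat z\in\gO_\ep$, then $D\phi(\hat z)=Du(\hat z)$ and $D^2\phi(\hat z)\geq D^2u(\hat z)$, so the degenerate ellipticity of $F$ (a consequence of (H1)) gives $F(D^2\phi(\hat z),D\phi(\hat z),u(\hat z),\hat z)\leq F(D^2u(\hat z),Du(\hat z),u(\hat z),\hat z)\leq 0$, which is \eqref{sub1}. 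If $\hat z\in\pl_\L\gO_\ep$ --- including the two corner sets $\pl_\L\gO_\ep\cap\pl_\T\gO_\ep$ and $\pl_\L\gO_\ep\cap\pl_\B\gO_\ep$ --- then, because $u$ is a classical subsolution of \dirichlet, $u(\hat z)\leq\beta(\hat z)$; this is exactly the Dirichlet alternative \eqref{subL}, which is among the admissible options in every one of those cases, so nothing further is needed.

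The only remaining case is $\hat z\in(\pl_\T\gO_\ep\cup\pl_\B\gO_\ep)\stm\pl_\L\gO_\ep$. Suppose $\hat z\in\pl_\T\gO_\ep\stm\pl_\L\gO_\ep$, so that $\hat x\in\gO$. By \eqref{oblique_TB}, $-\gamma^+(\hat z)$ points strictly into $\gO_\ep$, hence $\hat z-t\gamma^+(\hat z)\in\gO_\ep$ for all sufficiently small $t>0$. Since $u-\phi$ has a maximum at $\hat z$, the right derivative at $t=0$ of $t\mapsto(u-\phi)(\hat z-t\gamma^+(\hat z))$ is $\leq 0$, i.e.\ $\gamma^+(\hat z)\cdot D(u-\phi)(\hat z)\geq 0$; combined with $\gamma^+(\hat z)\cdot Du(\hat z)\leq\beta^+(\hat z)$ this yields $\gamma^+(\hat z)\cdot D\phi(\hat z)\leq\beta^+(\hat z)$, which is \eqref{subT}. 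The case $\hat z\in\pl_\B\gO_\ep\stm\pl_\L\gO_\ep$ is identical, with $\gamma^+,\beta^+$ replaced by $\gamma^-,\beta^-$ and \eqref{oblique_TB} used on the bottom face. This exhausts all possibilities.

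I do not expect a genuine obstacle: the only point needing a little care is that obliqueness of $\gamma^\pm$ to the top and bottom faces supplies an admissible inward direction for the one-sided derivative argument, and this is already established in the proof of Proposition \ref{equivalenza}. The essential --- and pleasant --- point is that, in contrast to the oblique lateral condition, the Dirichlet condition $u\leq\beta$ (resp.\ $u\geq\beta$) on $\pl_\L\gO_\ep$ holds pointwise, so it is automatically available as the alternative \eqref{subL} at every lateral point, corners included; this is precisely why Lemma \ref{maximum} plays no role here.
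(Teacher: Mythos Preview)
Your proof is correct and follows essentially the same approach as the paper's own proof: case-split on the location of the maximum point, use degenerate ellipticity in the interior, obliqueness of $\gamma^\pm$ on the top/bottom faces away from $\pl_\L\gO_\ep$, and the pointwise Dirichlet constraint $u\leq\beta$ on all of $\pl_\L\gO_\ep$ including corners. Your exposition is slightly more detailed (spelling out the one-sided directional derivative and noting explicitly that the upper semicontinuous envelope of $u$ is $u$), but the structure and key observations coincide with the paper's.
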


\bproof We only treat the claim about the subsolution property. Let $u\in C^2(\ol{\gO_\ep})$ be a 
classical subsolution to \dirichlet. Assume that for some $\phi\in C^2(\ol{\gO_\ep})$ 
and $c\in \ol{\gO_\ep}$, $u-\phi$ takes a maximum at $c$. As in the proof of Proposition
\ref{equivalenza}, thanks to the degenerate ellipticity of $F$ and the oblique property 
of $\gamma^\pm$, as stated as \eqref{oblique_TB}, we easily see that if $c\in\gO_\ep $, then 
$F(D^2\phi(c),D\phi(c),u(c),c)\leq 0$, if $c\in\pl_\T\gO_\ep\stm \pl_\L\gO_\ep$, then 
$\gamma^+\cdot D\phi(c)\leq \beta^+(c)$, and if $c\in \pl_\B\gO_\ep\stm 
\pl_\L\gO_\ep$, then $\gamma^-\cdot D\phi(c)\leq \beta^-(c)$. It remains to check the case when 
$c\in \pl_\L\gO_\ep$. However, if $c\in\pl_\L\gO_\ep$, then we have $u(c)\leq \beta(c)$, which 
is enough to conclude that $u$ is a viscosity subsolution to \dirichlet.
\eproof

\bproof[Proof of Proposition \ref{exisnew}]  First, we build classical strict sub and super solutions to 
\problem . Choose functions  $\rho, h\in C^2(\ol\gO,\R)$ such that $h$ satisfies
$$ g^-(x)<h(x)<g^+(x) \ \ \ \FOR x\in\ol\gO. $$
while $\rho$ will be chosen depending on the lateral boundary condition. 
We choose a constant $\gd_0>0$ so that
\beq\label{eq3.0}
\pm(g^\pm(x)-h(x))>\gd_0 \ \ \FOR x\in\ol\gO.  
\eeq
As suggested by the formal expansion argument in the previous section, we set
\[
\overline v(x)=\beta_o(x)-\gamma_o(x)\cdot D\rho(x),  \ \ \underline v(x)=\beta_o(x)+\gamma_o(x)\cdot D\rho(x) \ \ \ \FOR x\in\ol\gO, 
\]
where $\beta_o, \gamma_o$ are defined by \eqref{eq1.4} and we choose  
families $\{\ol v_\ep\mid 0<\ep<\ep_0\}$ and $\{\ul v_\ep\mid 0<\ep<\ep_0\}$ of $C^2$ functions, approximating $\ol v$ and $\ul v$, respectively. 
According to Lemma \ref{lem1} and Remark \ref{rem1}, we can choose $\{\ol v_\ep \}$ and $\{\ul v_\ep \}$ so that 
as $\ep\to 0+$,
\[
\ol v_\ep=\ol v+O(\ep), \quad D\ol v_\ep=D\ol v+o(1), 
\quad\AND\quad \quad D^2\ol v_\ep=o(\ep^{-1}). 
\]
\[
\ul v_\ep=\ul v+O(\ep), \quad D\ul v_\ep=D\ul v+o(1), 
\quad\AND\quad \quad D^2\ul v_\ep=o(\ep^{-1}). 
\]

 For a constant $\gL>0$, to be fixed later, we can
define $\ol\psi_\ep,\, \ul\psi_\ep\in C^2(\ol\gO\tim[-1,1])$ by putting
\[
\ol\psi_\ep(x,y)=\rho(x)+\ol v_\ep(x)y+\fr{\gL}{2}(y-\ep h(x))^2 \ \ \FOR (x,y)\in\ol\gO\tim[-1,1]
\]                       
and
\[
\ul\psi_\ep(x,y)=-\rho(x)+\ul v_\ep(x)y-\fr{\gL}{2}(y-\ep h(x))^2 \ \ \FOR (x,y)\in\ol\gO\tim[-1,1].
\]
The function $\rho$ will be chosen later in order that $\ol\psi_\ep$ and $\ul \psi_\ep$ satisty the lateral boundary condition.
When ever we need to treat both $\ol\psi_\ep$  and $\ul\psi_\ep$ we will use $\psi_\ep$ idem for $\ol v_\ep$ and $\ul v_\ep$.

Note by the definition of $\ol v$ that
\[\bald
(\gamma^+_1\cdot  D\rho +\ol v_\ep-\beta^+)(x,\ep g^+(x))
&=(\gamma^+_1\cdot D\rho +\ol v-\beta^+)(x,\ep g^+(x))+O(\ep)
\\&=(\gamma_o\cdot D\rho + \ol v-\beta_o)(x)+O(\ep)
=O(\ep),
\eald
\]
and
\[\bald
(\gamma^-_1\cdot  D\rho -\ol v_\ep-\beta^-)(x,\ep g^-(x))
&=(\gamma^-_1\cdot D\rho -\ol v-\beta^-)(x,\ep g^-(x))+O(\ep)
\\&=(-\gamma_o\cdot D\rho - \ol v+\beta_o)(x)+O(\ep)
=O(\ep),
\eald
\]
where we have used that by \eqref{eq1.6},  $(\gamma^\pm_1,\beta^\pm)(x,\ep g(x))=(\pm\gamma_o,\pm\beta_o)(x)+O(\ep)$, (for $g= g^+$ and $g= g^-$).  Similarly, we see that 
\[\bald
(-\gamma^+_1\cdot  D\rho +\ul v_\ep-\beta^+)(x,\ep g^+(x))
=(-\gamma_o\cdot D\rho + \ul v-\beta_o)(x)+O(\ep)
=O(\ep),
\eald
\]
and
\[\bald
(-\gamma^-_1\cdot  D\rho -\ul v_\ep-\beta^-)(x,\ep g^-(x))
&=(\gamma_o\cdot D\rho - \ul v+\beta_o)(x)+O(\ep)
=O(\ep).
\eald
\]

We now compute
\[
D\ol \psi_\ep(x,y)=( D\rho(x)+ yD\ol v_\ep(x) + \gL\ep(\ep h(x)-y)Dh(x), \ol v_\ep(x)+\gL(y-\ep h(x)).
\]
Hence, 
\[\bald
&(\gamma^\pm \cdot D\ol \psi_\ep-\beta^\pm)(x,\ep g^\pm(x))
=(\gamma_1^\pm\cdot D\rho\pm \ol v_\ep-\beta^\pm)(x,\ep g^\pm(x)) 
\\&\quad +\gamma_1^\pm(x,\ep g^\pm(x))\cdot(\ep g^\pm(x) D\ol v_\ep(x) +\gL\ep(\ep h(x)-\ep g^\pm(x))Dh(x)) 
\\&\quad \pm \gL(\ep g^\pm(x)-\ep h(x))
\\&= \pm\ep \gL (g^\pm(x)-h(x)) +O(\ep).
\eald
\]
Similarly, 
\[(\gamma^\pm \cdot D\ul\psi_\ep-\beta^\pm)(x,\ep g^\pm(x))
= \mp\ep \gL (g^\pm(x)-h(x)) +O(\ep).
\]
These, combined with \eqref{eq3.0}, assure that, for some constant $C_1>0$  independent of $\ep$, 
\[
(\gamma^\pm\cdot D\ol\psi_\ep-\beta^\pm)(x,\ep g^\pm(x))
\geq 
\ep\gL\gd_0-C_1\ep,
\]
and
\[
(\gamma^\pm\cdot D\ul\psi_\ep-\beta^\pm)(x,\ep g^\pm(x))
\leq 
-\ep\gL\gd_0+C_1\ep.
\]
Selecting $\gL>0$ so that $\gd_0\gL>C_1$, we find that 
\beq\label{eq3.1}\left\{\,\bald
&\gamma^+\cdot D\ol \psi_\ep>\beta^+ \ \ \ON \pl_\T\gO_\ep,  
\qquad\quad 
\gamma^-\cdot D\ol\psi_\ep>\beta^- \ \ \ON \pl_\B\gO_\ep,  
\\& \gamma^+\cdot D\ul \psi_\ep<\beta^+ \ \ \ON \pl_\T\gO_\ep, 
\quad \AND\quad  \gamma^-\cdot D\ul\psi_\ep<\beta^- \ \ \ON \pl_\B\gO_\ep.   
\eald
\right.
\eeq

Next, we need to choose $\rho$ in order to satisfy the lateral condition. Hence we need to treat two different cases:

For problem \oblique we select $\rho\in C^1(\ol\gO)$ so that 
$D\rho(x)/|D\rho(x)|=\nu(x)$ for $x\in\pl\gO$. Since $\gamma_1\cdot \nu>0$ on $\pl\gO\tim[-1,1]$, 
we have $\gamma_1\cdot D \rho >0$ on $\pl\gO\tim[-1,1]$. By approximating $\rho$ by a smooth 
function, we may assume that $\rho\in C^2(\ol\gO)$ and $\gamma_1\cdot D\rho>0$ on $\pl\gO\tim[-1,1]$. Multiplying $\rho$ by a large constant and replacing $\rho$ with the resulting 
function, 
we may assume that 
$$\gamma_1 \cdot D\rho>|\beta|+1\ \  \mbox{on }\ \pl\gO\tim[-1,1].$$
We compute that for any $x\in\pl\gO$, 
\[
\gamma \cdot D\ol \psi_\ep
= \gamma_1\cdot D\rho+y\gamma_1\cdot D\ol v_\ep (x)+ \ep \gL (\ep h(x)-y)\gamma_1\cdot Dh(x) +\gamma_2\ol v_\ep + O(\ep)
\] 
\[
=\gamma_1\cdot D\rho +o(1)>\beta
\] 
Similarly
\[
\gamma \cdot D\ul \psi_\ep=-\gamma_1\cdot D\rho +o(1)<\beta.
\]

For problem \dirichlet instead we choose $\rho\in C^2(\ol\Omega)$ such that
$$\rho>|\beta|+1$$
and the lateral Dirichlet condition is satisfied.

Observe that $\ol \psi_\ep$ and $\ul\psi_\ep$ have the same regularity, as functions on $\ol{\gO_\ep}$. 
We need to prove that
\[
\psi_\ep=O(1), \quad D\psi_\ep=O(1), \quad\AND\quad D^2\psi_\ep=O(1).
\]
In this regard, the only terms in $\psi_\ep$ that need to be taken care of are
$f_\ep(x,y):=y\ol v_\ep(x)$ or $f_\ep(x,y):=y\ul v_\ep(x)$ but in both cases
\[\left\{\,\bald
&f_\ep(x,y)=O(\ep), \quad Df_\ep(x,y)=(yDv_\ep(x),v_\ep(x))=O(1), 
\\& D_yDf_\ep(x,y)=(Dv_\ep, 0)=O(1), \quad 
D_y^2f_\ep(x,y)=0, 
 \\& D_x^2f_\ep(x,y)=yD^2v_\ep(x)=O(\ep)o(\ep^{-1})=o(1). 
\eald\right. 
\]
Consequently, 
\[
F(D^2\psi_\ep, D\psi_\ep,\psi_\ep,x,y)=O(1)
\]
as functions on $\ol{\gO_\ep}$, and there is a constant $C_3>0$ such that 
\[
|F(D^2\psi_\ep, D\psi_\ep,\psi_\ep,x,y)|\leq C_3 \ \ \ \FOR 
(x,y)\in\ol{\gO_\ep} \ \AND\ 0<\ep<\ep_1. 
\]

For any $M>0$, by (H1), we have for $(x,y)\in\ol{\gO_\ep}$,
\[\bald
&F(D^2\ol \psi_\ep, D\ol \psi_\ep,\ol \psi_\ep+M,x,y)\geq -C_3 +\alpha M,
\\&F(D^2\ul \psi_\ep, D\ul \psi_\ep,\ul \psi_\ep-M,x,y)\leq C_3 -\alpha M.
\eald
\]
By selecting $M>0$ so that $\alpha M>C_3$, recalling \eqref{eq3.1} and the appropriate choice of $\rho$ that implies the lateral condition,
we find that for every $\ep\in(0,\ep_1)$, the functions $\ol \psi_\ep +M$ and $\ul \psi_\ep-M$  are classical, strict, sub 
and supersolutions to \problem, respectively.  

Now, let $\ep\in(0,\ep_1)$. 
It follows from 
the classical argument for the maximum principle that if $u^\ep$ is a 
viscosity solution to \problem, then
$\ul \psi_\ep -M\leq u^\ep\leq \ol \psi_\ep+M$ on $\ol{\gO_\ep}$.  
This shows that the family of all viscosity solutions $u^\ep$ to 
\problem, with $\ep\in (0,\ep_1)$, is uniformly bounded 
on $\ol{\gO_\ep}$. That is, there is a constant $C>0$, independent of $\ep>0$, 
such that  if $u^\ep$ is a 
viscosity solution to \problem, then $|u^\ep(x,y)|\leq C$ for all 
$(x,y)\in\ol{\gO_\ep}$ and $\ep\in(0,\ep_1)$. 

Let $\ep_*>0$ be a constant from Proposition \ref{equivalenza} or \ref{equivalenzaD} depending on the lateral condition. 
We may assume that $\ep_1\leq \ep_*$. Let $\ep\in(0,\ep_1)$.  By Proposition \ref{equivalenza} or  \ref{equivalenzaD}, 
the functions $\ol \psi_\ep+M$ and $\ul \psi_\ep-M$ are viscosity sub and super solutions of 
\problem, respectively. 
Thus, for any $\ep\in(0,\ep_1)$, Perron's method readily produces a viscosity solution $u^\ep$ to \problem such that $\ul \psi_\ep-M\leq u^\ep\leq  \ol \psi_\ep+M$ on $\ol{\gO_\ep}$.    Indeed, if we set 
$$
u^\ep(z)= \sup \{  v(z) \mid v  \mbox{ is a viscosity subsolution  of } \eqref{eq1.1}-\eqref{eq1.3},  \ul \psi_\ep-M\leq v \leq  \ol \psi_\ep+M \}
$$
then the function $u^\ep$ is a viscosity solution to \problem. Note that Proposition \ref{equivalenza} or \ref{equivalenzaD} are crucial  when one checks that the lower semicontinuous envelope of $u^\ep$ is a supersolution to \problem.
\eproof

\section{Proof of main result}
We begin by stating Theorem \ref{thm1} in the general case of problem \problem. 
Let $S_\ep$ denote the set of viscosity solutions to \problem, with given 
$\ep>0$.
Under the hypotheses of Proposition \ref{exisnew}, we define the half relaxed limits $u^\pm$ 
by 
\beq \label{defupum}
\bald 
u^+(x)&=\lim_{r\to 0^+}\sup\{u(\xi,\eta)\mid u\in S_\ep,\ (\xi,\eta)\in \ol{\gO_\ep},\ 
0<\ep<r,\ |\xi-x|<r \},
\\ u^-(x)&=\lim_{r\to 0^+}\inf\{u(\xi,\eta)\mid u\in S_\ep,\ (\xi,\eta)\in\ol{\gO_\ep},\ 
0<\ep<r,\ |\xi-x|<r\},
\eald
\eeq
which are bounded functions on $\ol\gO$ by Proposition \ref{exisnew}. 
Let us consider the following problems in $\Omega$
\begin{equation}\label{limeq.lbc.o}
\left\{\begin{array}{lc}
G(D^2u,Du,u,x) =0 & \IN\gO, \\
\gamma \cdot Du=\beta(x,0) \ & \ON\pl\gO
\end{array}
\right.
\end{equation}
or
\begin{equation}\label{limeq.lbc.d}
\left\{\begin{array}{lc}
G(D^2u,Du,u,x) =0 & \IN\gO, \\
u=\beta(x,0) \ & \ON\pl\gO
\end{array}
\right.
\end{equation}
which we also call respectively \obliquezero and \dirichletzero.
\begin{theorem}\label{thm1new} Under the hypotheses of Proposition \ref{exisnew}, 
if  $S_\ep$ are the solutions of problem \oblique, the functions $u^+$ and $u^-$ are a viscosity sub and super solutions to \eqref{limeq.lbc.o}, respectively;

\noindent while
if  $S_\ep$ are the solutions of problem \dirichlet, the functions $u^+$ and $u^-$ are a viscosity sub and super solutions to \eqref{limeq.lbc.d}, respectively.
\end{theorem}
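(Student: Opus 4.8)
The plan is to prove the subsolution statement for $u^+$ (the supersolution statement for $u^-$ being symmetric, obtained by interchanging $\max/\min$, $\le/\ge$, and semicontinuous envelopes). The strategy is the standard test-function/half-relaxed-limits argument \`a la Evans--Barles--Perthame, but the key new ingredient is that the formal expansion of Section 2 must be turned into a genuine \emph{perturbed test function}: given a smooth $\phi$ touching $u^+$ from above at an interior point $\hat x\in\gO$, we build a test function on $\ol{\gO_\ep}$ of the shape
\[
\Phi_\ep(x,y)=\phi(x)+\ep\,v_\ep(x)\,y+\ep^2 u^2_\ep(x,y/\ep),
\]
where $v_\ep\approx \beta_o-\gamma_o\cdot D\phi$ and $u^2_\ep$ is the smoothed version of the quadratic $u^2$ from \eqref{eq2.8+4}, with the smoothings furnished by Lemma \ref{lem1} and Remark \ref{rem1} so that the error terms are controlled: $v_\ep=v+O(\ep)$, $Dv_\ep=Dv+o(1)$, $D^2v_\ep=o(\ep^{-1})$, and similarly for the coefficients $w^\pm$ entering $u^2_\ep$.

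First I would set up the contradiction: suppose $u^+$ is not a subsolution of \eqref{limeq.lbc.o} at some point. If that point is interior, $\hat x\in\gO$, there is $\phi\in C^2$ with $u^+-\phi$ attaining a strict local max at $\hat x$ and $G(D^2\phi(\hat x),D\phi(\hat x),u^+(\hat x),\hat x)=:\theta>0$. Using the definition of $u^+$ as a half-relaxed limit and Proposition \ref{exisnew}, extract $\ep_n\to 0$, solutions $u^{\ep_n}\in S_{\ep_n}$, and points $(x_n,y_n)\to(\hat x,\cdot)$ at which $u^{\ep_n}-\Phi_{\ep_n}$ (with $\Phi_\ep$ as above, plus a localizing term $|x-\hat x|^2$ and a vertical-confinement term forcing the max into the interior or onto $\pl_\T\cup\pl_\B$) attains its maximum over $\ol{\gO_{\ep_n}}$, with $u^{\ep_n}(x_n,y_n)\to u^+(\hat x)$. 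The heart of the matter is then to show $(x_n,y_n)$ cannot be a boundary point where the relevant boundary inequality kicks in --- and this is where the obliqueness hypotheses \eqref{eq1.3+1}/\eqref{oblique_TB} are used: on $\pl_\T\gO_\ep$, $\gamma^+\cdot D\Phi_\ep = \gamma_1^+\cdot D\phi \pm v_\ep + O(\ep)$ terms, which by the construction of $v_\ep$ and condition \eqref{eq1.4} reproduces $\beta^+$ up to $o(1)$, so the boundary test inequality $\gamma^+\cdot D\Phi_\ep\le\beta^+$ fails for small $\ep$ after a suitable $\pm$ adjustment; likewise on $\pl_\B$. (If $\hat x\in\pl\gO$, the lateral test inequality is handled exactly as in the proof of Proposition \ref{exisnew}, using that $\rho$ --- here $\phi$ --- can be assumed oblique on $\pl\gO$, or in the Dirichlet case trivially since $u^+\le\beta$ there follows by passing to the limit.) Hence for large $n$ the interior viscosity inequality holds at $(x_n,y_n)$:
\[
F\big(D^2\Phi_{\ep_n}(x_n,y_n),D\Phi_{\ep_n}(x_n,y_n),u^{\ep_n}(x_n,y_n),x_n,y_n\big)\le 0.
\]

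Then I would pass to the limit in this inequality. The computations of Section 2 --- equations \eqref{a1}--\eqref{a5} --- show precisely that $D\Phi_\ep\to(D\phi(\hat x),\beta_o-\gamma_o\cdot D\phi(\hat x))$ and $D^2\Phi_\ep\to A+B+C$ evaluated at $(D^2\phi(\hat x),D\phi(\hat x),\hat x)$, with the smoothing errors vanishing by the $o(1)$/$O(\ep)$ bookkeeping (the only dangerous term $\ep^2\cdot\ep^{-2}\cdot(\text{second }x\text{-derivatives of }u^2_\ep)$ being $o(1)$ because $D^2_x u^2_\ep=o(\ep^{-1})\cdot O(\ep^2)\cdot\ep^{-0}$, exactly as in the existence proof). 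Using continuity of $F$ and $u^{\ep_n}(x_n,y_n)\to u^+(\hat x)$, the limit of the displayed inequality is $F(A+B+C,(D\phi,\beta_o-\gamma_o\cdot D\phi),u^+(\hat x),(\hat x,0))\le 0$, i.e. $G(D^2\phi(\hat x),D\phi(\hat x),u^+(\hat x),\hat x)\le 0$, contradicting $\theta>0$.

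The main obstacle I anticipate is the boundary analysis at the corners $\pl_\L\gO_\ep\cap(\pl_\T\gO_\ep\cup\pl_\B\gO_\ep)$ and, more subtly, ensuring the maximum of $u^{\ep_n}-\Phi_{\ep_n}$ is actually \emph{captured} at points whose vertical coordinate stays in the admissible range $\ep g^-<y<\ep g^+$ (or on the top/bottom faces) rather than being irrelevant --- this forces a careful choice of the confining term and a verification, using \eqref{eq3.0}-type positivity and the strictness built into $\theta$, that the corner and lateral alternatives in the viscosity definition are ruled out for small $\ep$. A secondary technical point is that $\Phi_\ep\in C^2(\ol{\gO_\ep})$ only after the mollifications, and one must check $\Phi_\ep=\phi+o(1)$ uniformly so that the maximum points converge to $\hat x$; both issues are resolved by the same estimates already deployed in the proof of Proposition \ref{exisnew}, so while lengthy, no genuinely new idea beyond the perturbed-test-function construction is needed.
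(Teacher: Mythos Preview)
Your overall strategy matches the paper's: build a perturbed test function out of the formal expansion $\phi+v\,y+u^2$, mollify the coefficients via Lemma \ref{lem1}/Remark \ref{rem1}, and pass to the limit in the viscosity inequality. However, two points are not merely ``lengthy'' but are the actual content of the argument, and your proposal handles them incorrectly.

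First, ruling out the top/bottom boundary alternative: you write that $\gamma^+\cdot D\Phi_\ep$ ``reproduces $\beta^+$ up to $o(1)$'' and that the boundary inequality then fails ``after a suitable $\pm$ adjustment''. In fact the expansion gives $\gamma^\pm\cdot D\Phi_\ep-\beta^\pm=o(\ep)$ on $\pl_\T\gO_\ep$, $\pl_\B\gO_\ep$, which has no sign, so nothing is excluded. The paper's fix is to add an extra quadratic term $\tfrac{\gd}{2}(y-\ep h)^2$ with $g^-<h<g^+$ and $\gd>0$ fixed; this contributes exactly $\pm\ep\gd(g^\pm-h)+o(\ep)>0$ for small $\ep$, giving the \emph{strict} inequalities $\gamma^\pm\cdot D\Psi_\ep>\beta^\pm$ on $\pl_\T\gO_\ep\cup\pl_\B\gO_\ep$. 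This is not a ``confinement'' device for the location of the maximum; it is what forces the subsolution property of $u_j$ to select the interior PDE (or lateral) alternative. The price is that $\gd$ appears in the $(N{+}1,N{+}1)$ entry of the limiting Hessian, so the argument ends by sending $\gd\to0^+$ using the degenerate ellipticity of $F$.

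Second, the lateral boundary when $\hat x\in\pl\gO$: you cannot ``assume $\phi$ oblique'' --- $\phi$ is a given test function. The paper does not try to exclude the lateral alternative at all; it simply keeps the dichotomy ``either \eqref{eq3.6} or \eqref{eq3.7}'' along the sequence, and in the limit this becomes ``either $G(D^2\phi,D\phi,u^+,\hat x)\le0$ or $\gamma(\hat x,0)\cdot D\phi(\hat x)\le\beta(\hat x,0)$'', which is precisely the viscosity subsolution condition for \eqref{limeq.lbc.o} at a boundary point. The corner worry you raise dissolves for the same reason: once the top/bottom alternatives are strictly ruled out by the $\gd$-term, at a corner only the PDE or the lateral condition can survive, and either is acceptable. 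So the proof is direct (no contradiction needed) and no separate corner analysis is required.
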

\bproof[Proof of Theorem \ref{thm1new}] We show first that the relaxed limit $u^+$ is a viscosity 
subsolution of \eqref{limeq.lbc.o}. Let $\phi\in C^\infty(\ol\gO)$ and assume that 
$u^+-\phi$ has a strict maximum at a point $\hat x\in\ol\gO$.  

We adapt the calculations in the 
previous section, to modify the test function $\phi$, so that the resulting function $\psi$  satisfies
$\gamma^+_1\cdot D_x\psi+D_y\psi>\beta^+$ on $\pl_\T\gO_\ep$ and 
$\gamma^-_1\cdot D_x\psi-D_y\psi>\beta^-$  on $\pl_\B\gO_\ep$.  
Referring \eqref{eq2.5}, 
\eqref{eq2.5+1}, 
\eqref{eq2.8+5}, and \eqref{a1}, with 
$\phi$ in place of $u^0$, we define 
\[\bald
v(x)&=\beta_o(x)-\gamma_o(x)\cdot D\phi(x), 
\\ w^\pm(x)&=\fr{g^\pm}{(g^+-g^-)} 
(l^\pm-k^\pm\cdot D\phi\mp \gamma_o(x)\cdot Dv), 
\\ \psi_\ep(x,y)&:=\phi(x)+\ep v(x)(y/\ep)+\fr{\ep^2}{2}\big\{(y/\ep -g^-)^2 w^+(x)+(y/\ep -g^+)^2
w^-(x)
\\&\qquad +\gd(y/\ep-h)^2\big\}
\\ &=\phi(x)+v(x)y+\fr{1}{2}\big\{(y -\ep g^-)^2 w^+(x)+(y -\ep g^+)^2
w^-(x)+\gd(y-\ep h)^2\big\},
\eald
\] 
where  $h\in C^2(\ol\gO)$ is a function chosen to satisfy 
\[
g^-(x)<h(x)<g^+(x) \ \ \ \FOR x\in\ol\gO,
\]  
and $\gd$ is an arbitrarily fixed positive constant. Comparison of $\psi_\ep$ above 
to the formal expansion in the previous section indicates that $\psi_\ep$ adds the last term as an additional one,  which will allow us to make the boundary inequalities strict.

By assumption, we have \ $v\in C^1(\ol\gO,\R)$, $w^\pm\in C(\ol\gO,\R)$, 
and $\psi_\ep\in C(\ol\gO\tim[-1,1],\R)$. To enhance the smoothness of $\psi_\ep$, 
we introduce families $\{v_\ep\}$, $\{w_\ep^\pm\}$, and $\{g_\ep^\pm\}$, with $\ep>0$, 
of functions on $\ol\gO$ 
approximating $v$, $w^\pm$, and $g^\pm$, respectively, such that 
\beq\label{eq3.3}\left\{\ \bald
&v_\ep, w_\ep^\pm, g^\pm \in C^\infty(\ol\gO),\quad
\\&v_\ep=v+O(\ep),\quad Dv_\ep=Dv+o(1), \quad D^2v_\ep=o(\ep^{-1}), 
\\& g_\ep^\pm=g^\pm+O(\ep),\quad Dg_\ep^\pm=Dg^\pm+o(1),\quad D^2g^\pm_\ep=o(\ep^{-1}), 
\\& w_\ep^\pm=w^\pm+o(1), \quad Dw_\ep^\pm=o(\ep^{-1}),\quad 
D^2w_\ep^\pm=o(\ep^{-2}).
\eald \right.
\eeq
Such a choice of $v_\ep, w_\ep^\pm$, and $g_\ep^\pm$ is possible thanks to Lemma \ref{lem1} and Remark \ref{rem1}. Moreover, reparametrizing $v_\ep$, we may assume that 
\beq\label{eq3.4}
v_\ep=v+o(\ep),\quad Dv_\ep=Dv+o(1), \quad D^2v_\ep=o(\ep^{-1}).
\eeq

Select $\ep_1\in(0,\ep_0)$ sufficiently small so that for $0<\ep<\ep_1$, 
\[
g_\ep^-+\ep_1<h<g_\ep^+-\ep_1\quad\ON\ \ol\gO.
\]
In what follows we consider only those with parameter $\ep\in(0,\ep_1)$.  
We define $\Psi_\ep\in C^\infty(\ol\gO\tim[-1,1],\R)$ by
\[
\Psi_\ep(x,y)=\phi(x)+v_\ep(x)y+\fr 12\{(y-\ep g^-_\ep)^2w_\ep^++(y-\ep g^+_\ep)^2 w_\ep^- 
+\gd(y-\ep h)^2 \}. 
\]
 
Compute that 
\beq\label{eq3.4+1}\left\{\, \bald
D_x\Psi_\ep&=D\phi +yDv_\ep +\fr 12\{(y-\ep g_\ep^-)^2Dw_\ep^+
+(y-\ep g_\ep^+)^2Dw_\ep^-\}
\\&\quad -\ep\{(y-\ep g_\ep^-)w_\ep^+Dg_\ep^-
+(y-\ep g_\ep^+)w_\ep^-Dg_\ep^++\gd(y-\ep h)Dh\},
\\ D_y\Psi_\ep&=v_\ep +(y-\ep g_\ep^-)w_\ep^+ +(y-\ep g_\ep^+)w_\ep^-+\gd(y-\ep h),
\eald\right. 
\eeq
and observe by \eqref{eq3.3} and \eqref{eq3.4} that for $g=g^+$ and $g=g^-$,
\beq\label{eq3.4+2}\left\{\, \bald
D_x\Psi_\ep(x,\ep g(x))&=D\phi +\ep g Dv+o(\ep),
\\ D_y\Psi_\ep(x,\ep g(x))&=v+\ep(g-g^-)w^++\ep(g-g^+)w^-+\ep\gd(g-h)+o(\ep). 
\eald\right.
\eeq
Hence,   
\[\bald
(\gamma^+_1\cdot D_x\Psi_\ep+D_y\Psi_\ep-\beta^+)(x,\ep g^+(x))
&=\gamma^+_1(x,\ep g^+)\cdot (D\phi+\ep g^+ Dv)
\\&\quad +v+\ep(g^+-g^-)w^+
+\ep\gd (g^+-h)
\\&\quad -\beta^+(x,\ep g^+)+o(\ep) 
\\&= \gamma_o(x)\cdot(D\phi+\ep g^+Dv)+\ep g^+ k^+\cdot D\phi
\\&\quad +v+\ep(g^+-g^-)w^++\ep\gd(g^+-h)
\\&\quad -\beta_o(x)-\ep g^+ l^++o(\ep). 
\eald
\]
By the definition of $v$, we have 
\[
\gamma_o(x)\cdot D\phi+v=\beta_o(x),
\]
and by the definition of $w^+$, we have 
\[
(g^+-g^-)w^+=g^+(l^+-k^+\cdot D\phi -\gamma_o(x)\cdot Dv). 
\]
Combining these together yields
\[
(\gamma^+_1\cdot D_x\Psi_\ep+D_y\Psi_\ep-\beta^+)(x,\ep g^+(x))=\ep \gd (g^+-h)+o(\ep). 
\]
A parallel calculation shows that
\[
(\gamma^-_1\cdot D_x\Psi_\ep-D_y\Psi_\ep-\beta^-)(x,\ep g^-(x))=\ep \gd (h-g^- )+o(\ep). 
\]
Consequently, 
reselecting $\ep_1$ small enough, we may assume that 
for any $0<\ep<\ep_1$, 
\beq\label{eq3.5}
\gamma^+_1\cdot D_x\Psi_\ep+D_y\Psi_\ep>\beta^+ \ \ \ON \pl_\T\gO_\ep
\ \ \AND
\ \ \gamma^-_1\cdot D_x\Psi_\ep-D_y\Psi_\ep>\beta^-\ \ \ON \pl_\B\gO_\ep.
\eeq

By the definition of $u^+$, there exists sequences $\{\ep_j\in(0,\ep_*)\mid j\in\N\}$, 
$\{u_j\in \USC(\ol{\gO_{\ep_j}},\R)\mid j\in\N\}$, and $\{(\xi_j,\eta_j)\in\ol{\gO_{\ep_j}}\}$ 
such that $u_j$ is a viscosity subsolution to \oblique, with $\ep=\ep_j$,
\ $\lim_{j\to\infty}u_j(\xi_j,\eta_j)=u^+(\hat x)$, and $\lim_{j\to \infty}(\xi_j,\eta_j)=(\hat x,0)$. 
For every $j\in\N$, choose a maximum point $(x_j,y_j)\in\ol{\gO_{\ep_j}}$ of $u_j-\Psi_{\ep_j}$. 
We claim that $\lim_{j\to \infty}(x_j,y_j)=(\hat x,0)$. To see this, 
by passing to a subsequence and concentrating on the subsequence, 
we may assume that for some $\bar x\in\ol\gO$, 
$\lim_{j\to \infty}(x_j,y_j)=(\bar x,0)$.   
Noting that 
\[
\lim_{\ep\to 0^+}\max_{(x,y)\in\ol{\gO_\ep}}|\Psi_\ep(x,y)-\phi(x)|=0,
\]
we observe that
\[\bald
&\limsup_{j\to \infty}(u_j-\Psi_{\ep_j})(x_j,y_j)\leq (u^+-\phi)(\bar x),
\\&\liminf_{j\to \infty}(u_j-\Psi_{\ep_j})(x_j,y_j)\geq \liminf_{j\to \infty}(u_j-\Psi_{\ep_j})(\xi_j,\eta_j)
=(u^+-\phi)(\hat x),
\eald
\]
which assures that $\bar x=\hat x$ and our claim is valid. The argument above can be 
used to show that $\lim_{j\to \infty}u_j(x_j,y_j)=u^+(\hat x)$. 

Since $u_j$ is a subsolution to \oblique, with $\ep=\ep_j$, we find in view of 
\eqref{eq3.5} that if $(x_j,y_j)\in\ol{\gO_{\ep_j}}\stm \pl_\L\gO_{\ep_j}$, then 
\beq\label{eq3.6}
F(D^2\Psi_{\ep_j}(x_j,y_j),D\Psi_{\ep_j}(x_j,y_j),u_j(x_j,y_j),x_j,y_j)\leq 0,
\eeq
and if $(x_j,y_j)\in\pl_\L\gO_{\ep_j}$, then either \eqref{eq3.6} or the following holds
\beq\label{eq3.7}
\gamma\cdot D_x\Psi_{\ep_j}(x_j,y_j)\leq \beta(x_j,y_j). 
\eeq

From \eqref{eq3.4+1}, together with \eqref{eq3.3}, 
we obtain on $\ol{\gO_\ep}$,
\[\bald
D_x^2\Psi_{\ep}&=D^2\phi+o(1), 
\\
D_x D_y\Psi_{\ep}&=Dv 
+o(1)
\\
D_y^2\Psi_{\ep}&=w_\ep^+ +w_\ep^-+\gd
\\&=w^++w^-+\gd+o(1). 
\eald
\]
Similarly to \eqref{a2}, we have 
\[
Dv=D\beta_o-D\phi D\gamma_o- \gamma_o D^2\phi. 
\]

From these together, we obtain 
\[\bald
D^2\Psi_\ep&=\begin{pmatrix}D^2\phi & Dv^\T 
\\ 
Dv
& w^++w^-+\gd \end{pmatrix}+o(1)
\\&=\begin{pmatrix}D^2\phi & (D\beta_o-D\phi D\gamma_o-\gamma_o D^2\phi)^\T 
\\ 
D\beta_o-D\phi D\gamma_o -\gamma_o D^2\phi 
& w^++w^-+\gd \end{pmatrix}+o(1),
\eald
\]
and, similarly to \eqref{eq3.4+2},
\[
D\Psi_\ep=(D\phi,v)+O(\ep)=(D\phi,\beta_o-\gamma_o\cdot D\phi)+O(\ep).  
\]

Recalling the definition of $b$ and $c$ and following the computation from \eqref{a4} to \eqref{a5}, 
we find that  
\[
(w^++w^-)(x)=c(x)+b(x)\cdot D\phi(x)+\gamma_o(x)D^2\phi(x)\gamma_o(x)^\T.
\]
Thus, we have 
\[\bald
D^2\Psi_\ep&=\begin{pmatrix}D^2\phi & (D\beta_o-D\phi D\gamma_o-\gamma_o D^2\phi)^\T
\\ 
D\beta_o-D\phi D\gamma_o-\gamma_oD^2\phi 
& c+b\cdot D\phi+\gamma_oD^2\phi \gamma_o^\T+\gd \end{pmatrix}+o(1),
\eald
\]
and we infer from \eqref{eq3.6} and \eqref{eq3.7} that if $\hat x\in\gO$, then
\beq\label{eq3.8}
F(A+B+C,\hat p,u^+(\hat x),\hat x,0)\leq 0,
\eeq
where $\hat p=(D\phi(\hat x),\,\beta_o(\hat x)-\gamma_o(\hat x)\cdot D\phi(\hat x))$, 
\[\bald
A&=\begin{pmatrix}D^2\phi(\hat x)& -D^2\phi(\hat x)\gamma_o(\hat x)^\T \\ 
-\gamma_o(\hat x) D^2\phi(\hat x)& \gamma_o(\hat x) D^2\phi(\hat x) \gamma_o(\hat x)^\T\end{pmatrix}, 
\\ B&=\begin{pmatrix} 0& -(D\phi(\hat x)\gamma_o(\hat x))^\T \\
-D\phi(\hat x)\gamma_o(\hat x) &b(\hat x)\cdot D\phi(\hat x)\end{pmatrix},
\\C&=\begin{pmatrix} 0& D\beta_o(\hat x)^\T
\\ D\beta_o(\hat x)
& c(\hat x)+\gd
\end{pmatrix},
\eald
\]
and if $\hat x\in\pl\gO$, then either \eqref{eq3.8} or the following holds 
\[
\gamma(\hat x,0)\cdot D\phi(\hat x)\leq \beta(\hat x, 0).
\]
Since $\gd>0$ is arbitrary, we conclude that $u^+$ is a viscosity subsolution to \eqref{limeq.lbc.o}. 

An argument parallel to the above shows that $u^-$ is a viscosity supersolution to 
\eqref{limeq.lbc.o}, the detail of which is left to the interested reader. 

The case of problem \dirichlet is also similar.
\eproof

Once the comparison principle is established for 
\problemzero, the above theorem assures the 
uniform convergence of solutions $u^\ep$ to 
\problem on $\ol\gO$ as $\ep\to 0^+$. 

\begin{corollary}\label{cor1} Assume the same hypotheses as  Proposition \ref{exisnew}  and, moreover, 
the comparison principle for \problemzero stated as follows:
\begin{enumerate}
\item[(H4)]If $v$ and $w$ are viscosity sub and super solutions to \problemzero,  
respectively, then
$v \leq w$ on $\ol\gO$.
\end{enumerate} Let $u^\ep$ be a viscosity solutions to \problem 
such that $\sup_{\ol\gO_\ep}|u^\ep|\leq C_0$ for every $0<\ep<\ep_1$ and some 
$\ep_1\in(0,\ep_0)$ and $C_0>0$.  
Then, for a (unique)  viscosity solution $u^0$ to \problemzero, 
we have
\beq\label{eq4.7}
\lim_{\ep\to 0^+} \sup_{(x,y)\in\ol{\gO_\ep}}|u^\ep(x,y)-u^0(x)|=0. 
\eeq
\end{corollary}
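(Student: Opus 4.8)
The plan is the classical half-relaxed limits argument (Barles--Perthame), feeding Theorem \ref{thm1new} into the comparison principle (H4).

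First I would record what Theorem \ref{thm1new} already provides: the functions $u^+$ and $u^-$ of \eqref{defupum}, which are finite on $\ol\gO$ by Proposition \ref{exisnew}, are respectively a viscosity subsolution and a viscosity supersolution of \problemzero. Directly from the definition \eqref{defupum} one has $u^-\le u^+$ on $\ol\gO$ (an $\inf$ and a $\sup$ over the same nonempty set), while (H4) gives the reverse inequality $u^+\le u^-$; hence $u^+=u^-=:u^0$ on $\ol\gO$. Since $u^+$ is upper semicontinuous and $u^-$ is lower semicontinuous by construction, $u^0\in C(\ol\gO)$, and being at once a sub- and a supersolution it is a viscosity solution of \problemzero; the uniqueness of such a solution is immediate from (H4), two solutions being each $\le$ the other.

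Next, for the particular family $\{u^\ep\}$ in the statement I would introduce the half-relaxed limits built from it alone,
\[\bald
\ol u(x)&=\lim_{r\to0^+}\sup\{u^\ep(\xi,\eta)\mid(\xi,\eta)\in\ol{\gO_\ep},\ 0<\ep<r,\ |\xi-x|<r\},
\\ \ul u(x)&=\lim_{r\to0^+}\inf\{u^\ep(\xi,\eta)\mid(\xi,\eta)\in\ol{\gO_\ep},\ 0<\ep<r,\ |\xi-x|<r\},
\eald\]
which are finite because $\sup_{\ol{\gO_\ep}}|u^\ep|\le C_0$. Here the $\sup$ (resp.\ $\inf$) is taken over a subfamily of the one entering \eqref{defupum}, so $u^-\le\ul u\le\ol u\le u^+$ on $\ol\gO$, and therefore $\ul u=\ol u=u^0$.

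Finally I would deduce \eqref{eq4.7} by contradiction. If it failed, there would be $\gd>0$, a sequence $\ep_j\to0^+$, and points $(x_j,y_j)\in\ol{\gO_{\ep_j}}$ with $|u^{\ep_j}(x_j,y_j)-u^0(x_j)|\ge\gd$; since $\ol\gO$ is compact and $|y_j|\le\ep_j\max_{\ol\gO}(|g^+|+|g^-|)\to0$, after passing to a subsequence $(x_j,y_j)\to(x_0,0)$ for some $x_0\in\ol\gO$, and we may assume the sign of $u^{\ep_j}(x_j,y_j)-u^0(x_j)$ is constant. If it is $\ge\gd$, the continuity of $u^0$ and the definition of $\ol u$ yield $u^0(x_0)+\gd\le\limsup_{j\to\infty}u^{\ep_j}(x_j,y_j)\le\ol u(x_0)=u^0(x_0)$, a contradiction; the case $\le-\gd$ is identical with $\ul u$ in place of $\ol u$. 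Hence \eqref{eq4.7} holds. The argument is routine; the one point requiring a little care is this last passage from the pointwise identity $\ul u=\ol u$ to the uniform estimate, where both the continuity of $u^0$ (forced by $u^+=u^-$) and the vanishing of the $y$-width of $\gO_\ep$ are used in an essential way.
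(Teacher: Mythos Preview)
Your proof is correct and follows essentially the same approach as the paper's: apply Theorem~\ref{thm1new} to get that $u^+$ and $u^-$ are sub- and supersolutions of \problemzero, use (H4) to conclude $u^+=u^-=:u^0\in C(\ol\gO)$, and then read off the uniform convergence from the definition of the half-relaxed limits. The paper's proof is very terse (``we easily deduce that \eqref{eq4.7} holds''); you simply spell out the standard Barles--Perthame contradiction argument and add the minor observation that the relaxed limits of the particular family $\{u^\ep\}$ are sandwiched between $u^-$ and $u^+$, which is a careful way to pass from the statement about $S_\ep$ to the one about the given $u^\ep$.
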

 
\bproof Let $u^+$ and $u^-$ be the functions defined by \eqref{defupum}. By Theorem 
\ref{thm1new}, $u^+$ and $u^-$ are viscosity sub snd super solutions to \problemzero, 
respectively.
The comparison principle (H4) implies that $u^+=u^-$, $u^0:=u^+=u^-\in C(\ol\gO,\R)$, 
and $u^0$ is a viscosity solution of \problemzero.  
By the definition of $u^\pm$, we easily deduce that \eqref{eq4.7} holds. 
\eproof

\section{Comparison principles and examples} 

In this section, we are concerned with the operators $F$, for which   
the uniform convergence \eqref{eq4.7} is valid i.e. such that the limit equation enjoys the condition for the validity of the  comparison principle. 
Standard and classical requirements for $G$  are: 
\beq\label{eq5.1} \left\{\,\begin{minipage}{0.85\textwidth}for any $ (X,p,x)\in \cS(N)\tim \R^N\tim\ol\gO,\, r,s\in\R$ and some constant $\alpha>0$,
\[
G(X,p,r,x)-G(X,p,s,x)\geq \alpha(r-s) \quad\IF r\geq s
\] 
\end{minipage}\right.
\eeq
and
\beq\label{eq5.2} 
\left\{\,
\begin{minipage}{0.85\textwidth} for each $R>0$, there is a function $\go_R\mid[0,\infty)\to[0,\infty)$, with $\go_R(0)=0$, 
such that 
\[
G(Y,p,r,y)-G(X,p,r,x)\leq\go_R(\gth |x-y|^2+|x-y|(|p|+1)),
\]
whenever $\gth>0$, $p\in\R^N$, $r\in[-R,R]$, $x,y\in\R^N$, and $X,Y\in\cS(N)$ satisfy
\[
-\gth I_{2N}\leq \begin{pmatrix}X& 0\\ 0 &-Y\end{pmatrix}\leq 
\gth \begin{pmatrix}I_N&-I_N\\-I_N&I_N\end{pmatrix}. 
\]
\end{minipage}
\right.
\eeq

\subsection{Comparison principle} 
Regarding the validity of the comparison principle (H4), 
we follow our previous work \cite{BBI} and need  more conditions on the boundary of $\gO$.
We always assume:
\beq\label{eq5.3} \left\{
\begin{minipage}{0.85\textwidth}
  There are a neighborhood V of $\pl\gO$, relative to $\ol\gO$, and a 
function $\go:[0,\infty)\to[0,\infty)$, with $\go(0^+)=0$, such that
\[
G(X, p, r, x) - G(Y, q, r, x) \leq \go(|X - Y| + |p - q|)
\]
for $X, Y \in \cS(N)$, $p, q \in \R^N$, $r \in \R$, and $x \in V$.
\end{minipage}\right.
\eeq

Moreover, if we are considering on the lateral boundary the general oblique conditions, in addition to \eqref{eq5.2111}, we assume either 
\beq\label{eq5.22} 
\begin{minipage}{0.85\textwidth}
the boundary $\pl\gO$ is of class $C^{2,1}$ and  $\gamma_1(\cdot,0)\in C^{0,1}(\pl\gO)$, 
\end{minipage}
\eeq 
or 
\beq\label{eq5.23} \begin{minipage}{0.85\textwidth}
$\pl\gO$ is of class $C^1$ and $\gamma_1(\cdot,0), \beta(\cdot,0)\in C^{1,1}(\pl\gO)$,
\end{minipage}
\eeq
while, in the  case of standard Neumann conditions on the lateral boundary,  the following suffices: 
\beq\label{eq5.4} \left\{
\begin{minipage}{0.85\textwidth} In addition to the condition that $\pl\gO$ is $C^1$,  there is a constant $r_0>0$ such that
\[
B_{r_0}(x+r_0\nu(x))\cap\gO=\emptyset \ \ \ \FOR x\in\pl\gO. 
\]
\end{minipage}\right.
\eeq

The precise result, according to \cite[Theorem I.2]{Ba93}, \cite[Theorem 2.1]{Is91}, and \cite{CIL}, is the following:

 \begin{lemma} \label{lemCompEx}
Assume \eqref{eq5.1}--\eqref{eq5.3}. 
\begin{itemize}
\item[{\bf (O)}] Assume \eqref{eq5.2111}, and either 
\eqref{eq5.22} or \eqref{eq5.23}.  Then, comparison principle (H4) holds for problem (O$_0$), i.e. \eqref{limeq.lbc.o}. 
\item[{\bf (N)}] Under the assumption \eqref{eq5.4}, the comparison principle (H4) holds
for   (N$_0$), i.e. \eqref{eq1.9}--\eqref{eq1.10}.
\end{itemize}

\end{lemma}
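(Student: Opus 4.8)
The plan is to verify that the hypotheses gathered here are exactly those under which the comparison results of \cite{Is91,Ba93,CIL} apply, and then to run the standard doubling-of-variables argument. First I would record the structural facts: $G$ is degenerate elliptic (noted just after \eqref{eq1.8+2}) and, by \eqref{eq5.1}, strictly monotone in $r$, so $G(D^2u,Du,u,x)=0$ is \emph{proper}; \eqref{eq5.2} is the usual continuity/structure condition linking the values of $G$ at nearby points through the matrices produced by the theorem on sums; and near $\pl\gO$ the stronger local continuity \eqref{eq5.3} is available. In case {\bf (O)} I would also note that the limiting lateral condition has oblique direction $\gamma_1(\cdot,0)$, with $\gamma_1(\cdot,0)\cdot\nu>0$ on $\pl\gO$ by \eqref{eq5.2111} at $y=0$ (recall $\gamma_2(\cdot,0)=0$); in case {\bf (N)} the lateral condition is the plain Neumann condition $\nu\cdot Du=0$.

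Next I would argue by contradiction. Suppose $v\in\USC(\ol\gO)$ is a subsolution, $w\in\LSC(\ol\gO)$ a supersolution, and $M:=\max_{\ol\gO}(v-w)>0$. For small $\gep>0$ I would maximize over $\ol\gO\times\ol\gO$ a doubled functional
\[
\Phi_\gep(x,y)=v(x)-w(y)-\frac{|x-y|^2}{2\gep}-\chi_\gep(x,y),
\]
with $\chi_\gep$ a boundary-adapted correction whose role is to keep the maximizer $(\hat x_\gep,\hat y_\gep)$ away from $\pl\gO$. In case {\bf (N)} I would build $\chi_\gep$ from the uniform exterior sphere geometry supplied by \eqref{eq5.4} (as in \cite{CIL}), so that at a boundary maximizer the perturbed normals point strictly inward and the Neumann inequalities $\nu(\hat x_\gep)\cdot p\le0$, $\nu(\hat y_\gep)\cdot q\ge0$ become incompatible with $\max\Phi_\gep\ge M+o(1)$ once $\gep$ is small. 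In case {\bf (O)} I would follow \cite{Is91,Ba93}: fix a defining function $\varrho$ of $\gO$ --- of class $C^2$ under \eqref{eq5.22}, of class $C^{1,1}$ under \eqref{eq5.23} --- with $\varrho<0$ in $\gO$, $\varrho=0$ and $D\varrho\cdot\gamma_1(\cdot,0)>0$ on $\pl\gO$, and take $\chi_\gep$ of Barles type (a small multiple of $\gep^{\theta}(\varrho(x)+\varrho(y))$ together with the obliqueness correction), so that the oblique subsolution/supersolution inequalities obtained at a point of $\pl\gO$ contradict the lower bound on $\max\Phi_\gep$.

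Once the maximizer lies in $\gO\times\gO$ (or the boundary terms have been absorbed), I would invoke the Crandall--Ishii lemma to produce $X,Y\in\cS(N)$ with $(p_\gep,X)$ in the closure of the second-order superjet of $v$ at $\hat x_\gep$, $(q_\gep,Y)$ in the closure of the second-order subjet of $w$ at $\hat y_\gep$, and the matrix inequality appearing in \eqref{eq5.2}, where $p_\gep-q_\gep=O(|\hat x_\gep-\hat y_\gep|)$ and $|\hat x_\gep-\hat y_\gep|^2/\gep\to0$. The defining inequalities give
\[
G(X,p_\gep,v(\hat x_\gep),\hat x_\gep)\le0\le G(Y,q_\gep,w(\hat y_\gep),\hat y_\gep),
\]
and subtracting them, using \eqref{eq5.1} with $v(\hat x_\gep)-w(\hat y_\gep)\to M>0$ and \eqref{eq5.2}--\eqref{eq5.3} to bound the $G$-difference by $\go_R$ of a quantity tending to $0$, yields $\alpha M\le o(1)$ as $\gep\to0$, a contradiction. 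Hence $v\le w$ on $\ol\gO$, which is (H4).

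The hard part is the boundary step: ensuring the doubled maximum does not persist on $\pl\gO$, where the oblique or Neumann test inequalities carry no information. This is precisely where the geometric regularity --- \eqref{eq5.22}, \eqref{eq5.23}, or \eqref{eq5.4} --- and the near-boundary continuity \eqref{eq5.3} are used, and where one must reproduce the delicate constructions of \cite{Is91,Ba93}, tuning the size of $\chi_\gep$ against the penalization parameter so that the induced first- and second-order errors vanish while the obliqueness sign conditions stay strict in the limit. The remainder is the routine proper/structure-condition machinery of \cite{CIL}.
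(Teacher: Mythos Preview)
Your proposal is essentially a sketch of the standard comparison argument from \cite{Is91,Ba93,CIL}, and it is in the right spirit. However, the paper does not actually prove this lemma: it simply states it as a direct consequence of \cite[Theorem~I.2]{Ba93}, \cite[Theorem~2.1]{Is91}, and \cite{CIL}, with no argument given. So there is nothing to compare against beyond noting that you are reproducing, in outline, the content of those references rather than citing them.

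One small remark on your sketch: the role of the boundary correction $\chi_\gep$ is not quite to ``keep the maximizer away from $\pl\gO$'' or to make the boundary inequalities ``incompatible with $\max\Phi_\gep\ge M+o(1)$''. Rather, the correction is designed so that when the maximizer does lie on $\pl\gO$, the oblique (or Neumann) boundary alternative in the viscosity definition is \emph{ruled out} by a strict sign, forcing the PDE inequality to hold there as well; one then proceeds uniformly with the PDE inequalities and the Crandall--Ishii lemma. Your later paragraph gets closer to this, but the earlier phrasing is slightly misleading. Apart from this, the outline is correct and matches the arguments in the cited references.
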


\subsection{Examples} 

\def\cM{\mathcal{M}}
We seek for some conditions on $F$ so that
that $G$, defined by \eqref{eq1.8+1}, satisfies \eqref{eq5.1}--\eqref{eq5.4}. 
To do this, we restrict ourselves to the Bellman-Isaacs  
type operators and consider 
function $F$ 
given by
\beq\label{eq5.5}
F(X,p,r,z)=\inf_{\gl \in L}\sup_{\mu\in M} F_{\gl\mu}(X,p,r,z)
\ \ \ON \cS(N+1)\tim\R^{N+1}\tim\R\tim \ol\gO\tim[-1,1],  
\eeq
where $L,\,M$ are given nonempty sets and 
\[
F_{\gl\mu}(X,p,r,z)=-\tr \gs_{\gl\mu}^T\gs_{\gl\mu} X-b_{\gl\mu}\cdot p+c_{\gl\mu} r-f_{\gl\mu},
\]
with 
\[ \bald
&\gs_{\gl\mu}\in C^{0,1}(\ol\gO\tim[-1,1],\cM(k,N+1)),\quad 
b_{\gl\mu}\in C^{0,1}(\ol\gO\tim[-1,1],\R^{N+1}), 
\quad
\\& c_{\gl\mu},\,f_{\gl\mu}\in C(\ol\gO\tim[-1,1],\R).
\eald
\]
In the above, $\cM(k,N+1)$ denotes the set of real $k\tim(N+1)$ matrices and $k\in\N$ is a
given fixed number.  
We assume the following uniform boundedness and equi-continuity of $\gs_{\gl\mu}, 
b_{\gl\mu}, c_{\gl\mu}, f_{\gl\mu}$: for any $(\gl,\mu)\in L\tim M$ and $z,z'\in\ol\gO\tim[-1,1]$,
\beq\label{eq5.6}\left\{\,\bald
&\max\{|\gs_{\gl\mu}(z)|,|b_{\gl\mu}(z)|,|c_{\gl\mu}(z)|,|f_{\gl\mu}(z)|\}\leq C_F,
\\&\max\{|\gs_{\gl\mu}(z)-\gs_{\gl\mu}(z')|,|b_{\gl\mu}(z)-b_{\gl\mu}(z')|\}\leq C_F|z-z'|,
\\&\max\{|c_{\gl\mu}(z)-c_{\gl\mu}(z')|,|f_{\gl\mu}(z)-f_{\gl\mu}(z')|\}\leq \go_F(|z-z'|), 
\eald\right.
\eeq
where $C_F$ is a positive constant and $\go_F\mid [0,\infty)\to[0,\infty)$ is a function satisfying $\go_F(0^+)=~0$.
Also, assume that for some constant $\alpha>0$,
\beq\label{eq5.7}
c_{\gl\mu}(z)\geq \alpha \ \ \ \FOR (\gl,\mu)\in L\tim M
\ \AND\ z\in\ol\gO\tim[-1,1]. 
\eeq 
With all the structural conditions on $F$ stated above, it is easily seen that $F$ satisfies (H1) and that 
\beq\label{eq5.8}
|F(X,p,r,z)-F(Y,q,r,z)|\leq C_F^2|X-Y|+C_F|p-q|
\eeq
for $X,Y\in\cS(N+1)$, $p,q\in\R^{N+1}$, and $(r,z)\in\R\tim\ol\gO\tim[-1,1]$. 
Moreover, if we write $A(X,x), B(p,x), C(x)$ for $A, B, C\in\cS(N+1)$ in \eqref{eq1.8+2}, respectively, it follows that for some constant $C_1>0$,
\[\bald
&|(A(X,x)+B(p,x)+C(x))-(A(Y,x)+B(q,x)+C(x))|
\\&+|(p,
\beta_0-\gamma_0 \cdot p)-(q,
\beta_0-\gamma_0 \cdot q)|
\leq C_1(|X-Y|+|p-q|)
\eald
\]
for $X,Y\in\cS(N)$ and $p,q\in\R^N$. The constant $C_1$ can be chosen to depend only on 
$\|(
\gamma_0,\beta_0,D\gamma_0,D\beta_0)\|_{L^\infty(\gO)}$. 
It is now clear that \eqref{eq5.8} implies \eqref{eq5.3}. 

Since 
\[
A(X,x)=\begin{pmatrix}I_N \\ -
\gamma_0(x)\end{pmatrix} X \begin{pmatrix}I_N & -
\gamma_0(x)^\T\end{pmatrix},
\]
setting 
\[
\tilde \gs_{\gl\mu}(x):=\gs_{\gl\mu}(x,0)\begin{pmatrix}I_N \\ -
\gamma_0(x)\end{pmatrix} \in \cM(k,N),
\]
we have
\[
\tr [(\gs_{\gl\mu}^\T\gs_{\gl\mu})(x,0) A(X,x)]=\tr [(\tilde\gs_{\gl\mu}^\T \tilde \gs_{\gl\mu})(x)X]. 
\]
Under assumption \eqref{eq1.5}, we see that $\tilde \gs_{\gl\mu}\in C^{0,1}(\ol\gO,\cM(k,N))$ 
and the family $\{\tilde\gs_{\gl\mu}\mid (\gl,\mu)\in L\tim M\}$ is equi-Lipschitz continuous on $\ol\gO$. 
Let $e_1,\ldots,e_N$ denote the standard basis of $\R^{N}$. Define $\tilde b_{\gl\mu}
=(\tilde b_{\gl\mu,1},\ldots,\tilde b_{\gl\mu,N})\in C(\ol\gO,\R^N)$ by
\[
\tilde b_{\gl\mu,i}(x)=\tr [(\gs_{\gl\mu}^\T\gs_{\gl\mu})(x,0)B(e_i,x)]+b_{\gl\mu,i}(x,0)
-b_{\gl\mu,N+1}(x,0) \gamma_0(x)\cdot e_i  
\] 
where $b_{\gl\mu,i}$ denotes the $i^{\text{th}}$ entry of the $(N+1)$--vector $b_{\gl\mu}$. 
In addition to the regularity assumptions \eqref{eq1.3+0} and \eqref{eq1.5}, if we assume 
that 
\beq\label{eq5.9}
\gamma_0\in C^{1,1}(\ol\gO,\R^N)\quad\AND\quad k^\pm\in C^{0,1}(\ol\gO,\R^{N}), 
\eeq
then $\tilde b_{\gl\mu}\in C^{0,1}(\ol\gO,\R^N)$ 
and the family $\{\tilde b_{\gl\mu}\}$ is 
equi-Lipschitz continuous on $\ol\gO$.  
Set 
\[
\tilde f_{\gl\mu}(x)=f_{\gl\mu}(x,0)+\tr [(\gs_{\gl\mu}^\T\gs_{\gl\mu})(x,0)C(x)] 
+b_{\gl\mu,N+1}(x,0)\beta_0(x)\ \ \FOR x\in\ol\gO,
\]  
and note that the family $\{\tilde f_{\gl\mu}\}$ is equicontinous on $\ol\gO$. Note also that 
the family $\{(\tilde \gs_{\gl\mu},\tilde b_{\gl\mu},f_{\gl\mu})\mid (\gl,\mu)\in L\tim M\}$ is 
uniformly bounded on $\ol\gO$. Observe that
\beq\label{eq5.10}
G(X,p,r,x)=\inf_{\gl\in L}\sup_{\mu\in M} (-\tr (\tilde \gs_{\gl\mu}^\T\tilde\gs_{\gl\mu} X)-\tilde b_{\gl\mu}\cdot p 
+c_{\gl\mu}r-\tilde f_{\gl\mu}),
\eeq
and, as is well-known (see e.g., \cite{CIL}), this function satisfies  \eqref{eq5.2}, provided 
that the family $\{(\tilde\gs_{\gl\mu},\tilde b_{\gl\mu},c_{\gl\mu},\tilde f_{\gl\mu})\}$ is uniformly 
bounded,  $\{(\tilde\gs_{\gl\mu},\tilde b_{\gl\mu})\}$ is equi-Lipschitz continuous, 
$\{(c_{\gl\mu},\tilde f_{\gl\mu})\}$ is equi-continuous on $\ol\gO$, which is the case under 
assumptions \eqref{eq5.6} and \eqref{eq5.9}. 

To simplify the presentation, we introduce  the following condition which combines several conditions:
\begin{enumerate}\item[(H5)] The function $F$ 
takes the form \eqref{eq5.5} and the conditions \eqref{eq5.6} and \eqref{eq5.7} are satisfied. Also, the conditions (H2), \eqref{eq1.2+2}, \eqref{eq1.3+0}, \eqref{eq1.4}, \eqref{eq1.5}, \eqref{eq1.6}, and \eqref{eq5.9} are in effect.
\end{enumerate}

\begin{theorem}\label{thmObliNuu} 
Assume (H5). 
\begin{itemize}
\item[{\bf (O)}]  Assume \eqref{eq5.2111},  and either \eqref{eq5.22} or \eqref{eq5.23}.  
For $\ep>0$ sufficiently small, let $u^\ep$ be a viscosity solution to \oblique i.e. \eqref{eq1.1}, \eqref{eq1.2}. 
Let $u^0$ be a unique viscosity solution to \obliquezero i.e.  \eqref{limeq.lbc.o}.
Then, $u^\ep$ converges to $u^0$ uniformly on $\ol\gO$: 
\[\lim_{\ep\to 0^+}\sup_{(x,y)\in\ol{\gO_\ep}}|u^\ep(x,y)-u^0(x)|=0.\] 

\item[{\bf (N)}] 
Assume  \eqref{eq5.4}. For $\ep>0$ sufficiently small, let $u^\ep$ be a viscosity solution to \neumann i.e.
\eqref{eq1.1}--\eqref{eq1.3}.  
Let $u^0$ be a unique viscosity solution to  \neumannzero i.e. \eqref{eq1.9}--\eqref{eq1.10}. Then,
$u^\ep$ converges to $u^0$ uniformly on $\ol\gO$:
\[\lim_{\ep\to 0^+}\sup_{(x,y)\in\ol{\gO_\ep}}|u^\ep(x,y)-u^0(x)|=0.\] 
\end{itemize}
\end{theorem}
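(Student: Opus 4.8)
The plan is to deduce the statement from the abstract convergence result of Corollary~\ref{cor1}, so that the whole proof amounts to assembling facts already established above. Since (H5) includes (H2), \eqref{eq1.2+2}, \eqref{eq1.3+0}, \eqref{eq1.4}, \eqref{eq1.5}, \eqref{eq1.6}, and since the Bellman--Isaacs structure \eqref{eq5.5} together with \eqref{eq5.6} and \eqref{eq5.7} forces (H1) (as observed around \eqref{eq5.8}), all hypotheses of Proposition~\ref{exisnew} hold; hence for every sufficiently small $\ep$ there is a viscosity solution $u^\ep$ to \problem with $\sup_{\ol{\gO_\ep}}|u^\ep|\le C_0$ for a constant $C_0$ independent of $\ep$. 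By Corollary~\ref{cor1}, it then suffices to check the comparison principle (H4) for the limit problem \problemzero in each of the two cases, and for this I would invoke Lemma~\ref{lemCompEx}.

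The first step is to verify that the operator $G$ of \eqref{eq1.8+1} satisfies \eqref{eq5.1}--\eqref{eq5.3}, which are the structural hypotheses of Lemma~\ref{lemCompEx}. This is precisely the content of the computation carried out in this section: the identity \eqref{eq5.10} represents $G$ as a Bellman--Isaacs operator with coefficients $\tilde\gs_{\gl\mu}, \tilde b_{\gl\mu}, c_{\gl\mu}, \tilde f_{\gl\mu}$ that are uniformly bounded, with $\{(\tilde\gs_{\gl\mu},\tilde b_{\gl\mu})\}$ equi-Lipschitz and $\{(c_{\gl\mu},\tilde f_{\gl\mu})\}$ equicontinuous on $\ol\gO$, under \eqref{eq5.6} and \eqref{eq5.9}. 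Then \eqref{eq5.2} is the classical estimate for such operators (see \cite{CIL}); \eqref{eq5.3} follows from the Lipschitz bound \eqref{eq5.8} for $F$ pushed through the affine substitution $X\mapsto A+B+C$, $p\mapsto(p,\beta_o-\gamma_o\cdot p)$ that defines $G$; and \eqref{eq5.1} is immediate from $c_{\gl\mu}\ge\alpha$ in \eqref{eq5.7}, since in \eqref{eq5.10} the zeroth-order coefficient of $G$ is still $c_{\gl\mu}$ evaluated on $\ol\gO\tim\{0\}$.

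In case (O) I would then add the standing hypotheses \eqref{eq5.2111} and one of \eqref{eq5.22}, \eqref{eq5.23}, and apply Lemma~\ref{lemCompEx}(O) to obtain (H4) for \obliquezero, i.e. \eqref{limeq.lbc.o}. In case (N) I would instead add the exterior ball condition \eqref{eq5.4} and apply Lemma~\ref{lemCompEx}(N) to obtain (H4) for \neumannzero, i.e. \eqref{eq1.9}--\eqref{eq1.10}. In either case Corollary~\ref{cor1} now applies directly: there is a unique viscosity solution $u^0$ to \problemzero and $\lim_{\ep\to 0^+}\sup_{(x,y)\in\ol{\gO_\ep}}|u^\ep(x,y)-u^0(x)|=0$, which is the claim.

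The only genuine work, as opposed to bookkeeping of hypothesis lists, is the verification of \eqref{eq5.1}--\eqref{eq5.3} for $G$, and within that the delicate point is the equi-Lipschitz continuity of the drift $\tilde b_{\gl\mu}$: its $i$-th entry involves $\tr[(\gs_{\gl\mu}^\T\gs_{\gl\mu})(x,0)B(e_i,x)]$ and $B$ carries a factor $D\gamma_o$, so one truly needs $\gamma_o\in C^{1,1}$ and $k^\pm\in C^{0,1}$, which is exactly why \eqref{eq5.9} is built into (H5). Everything else --- that (H5) supplies the hypotheses of Proposition~\ref{exisnew}, that the uniform bound from that proposition feeds Corollary~\ref{cor1}, and that the added regularity of $\pl\gO$ and of $\gamma_1,\beta$ is what Lemma~\ref{lemCompEx} requires --- is routine matching of statements.
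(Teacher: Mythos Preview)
Your proposal is correct and follows essentially the same route as the paper: both reduce the theorem to Corollary~\ref{cor1} by noting that (H5) supplies (H1) and the remaining hypotheses of Proposition~\ref{exisnew}, and then invoke Lemma~\ref{lemCompEx} for the comparison principle (H4), relying on the verification of \eqref{eq5.1}--\eqref{eq5.3} for $G$ already carried out in the text preceding the theorem. Your write-up is simply more explicit about which computation from the section feeds which hypothesis of Lemma~\ref{lemCompEx}, but the logical skeleton is identical.
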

 
 \bproof 
Thanks to Lemma \ref{lemCompEx}, this theorem is just an application of  Corollary \ref{cor1}, since,
as noted above, $F$ satisfies (H1), which, together with (H5), completes the hypotheses of Proposition \ref{exisnew}. 
\eproof

\subsubsection{\bf Dirichlet boundary condition} We consider 
the case where \problem 
(resp., \problemzero) corresponds to \dirichlet (resp., \dirichletzero), 
in other words, problem \eqref{eq1.1}, \eqref{eq1.2}, and \eqref{eq.lbc.3} 
(resp., \eqref{limeq.lbc.d}). 
If the Dirichlet condition in \eqref{limeq.lbc.d}
is consistent  with the classical one, 
the comparison principle (H4) for \problemzero
is well understood. 

As in the previous subsections, we restrict ourselves to the Bellman-Isaacs 
type operators and thus keep the structure condition  
that \eqref{eq5.5}, \eqref{eq5.6}, \eqref{eq5.7}, and \eqref{eq5.9} are valid, 
and seek for some cases when  the Dirichlet condition in \eqref{limeq.lbc.d} 
is consistent  with the classical one. 

\begin{lemma} \label{lem5} Assume \eqref{eq5.4}, \eqref{eq5.5}, \eqref{eq5.6}, \eqref{eq5.7}, and \eqref{eq5.9}. 
Furthermore, assume 
that for any $x\in \pl\gO$, 
\beq\label{eq5.30}
\inf_{(\gl,\mu)\in L\tim M}|\nu(x)\bmat I_N & -\gamma_0^\T(x)\emat \gs_{\gl,\mu}^\T(x,0)|>0.  
\eeq
Then, the Dirichlet condition in \eqref{limeq.lbc.d} can be understood in the classical sense. 
\end{lemma}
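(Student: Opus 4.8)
The claim to be proved is that every viscosity subsolution $u$ of \eqref{limeq.lbc.d} satisfies $u\le\beta(\cdot,0)$ on $\pl\gO$, and every viscosity supersolution $w$ satisfies $w\ge\beta(\cdot,0)$ on $\pl\gO$; this is exactly the assertion that the boundary condition holds in the classical sense. I would treat the subsolution case, the supersolution one being symmetric. Assume by contradiction that $u(x_0)>\beta(x_0,0)$ for some $x_0\in\pl\gO$, and fix $\theta>0$ with $u(x_0)>\beta(x_0,0)+2\theta$.

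The key device is a smooth exterior barrier at $x_0$ supplied by \eqref{eq5.4}: with $z_0:=x_0+r_0\nu(x_0)$, put $\psi(x):=|x-z_0|-r_0$. Since $z_0\notin\ol\gO$, $\psi$ is smooth near $\ol\gO$, and one checks $\psi\ge 0$ on $\ol\gO$, $\psi(x_0)=0$, $D\psi(x_0)=-\nu(x_0)$, $|D\psi|\equiv 1$, and $|D^2\psi|$ bounded near $x_0$. For parameters $0<\eta\ll 1$, $0<r_\theta<r_0$, $\gL:=\theta/r_\theta^2$, and $K\gg 1$ to be fixed, I would take
\[
\phi(x)=u(x_0)+K\big(\sqrt{\psi(x)+\eta}-\sqrt{\eta}\,\big)+\gL|x-x_0|^2 ,
\]
extended to a $C^2(\ol\gO)$ test function that is $>\sup_{\ol\gO}u$ outside $B_{r_\theta}(x_0)$ (possible since $u$ is bounded above). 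Because $t\mapsto\sqrt{t+\eta}$ is concave, $D^2\phi$ contains the term $-\tfrac{K}{4(\psi+\eta)^{3/2}}D\psi\,D\psi^\T$, which is very large and negative along $D\psi$; and near $x_0$ the direction $D\psi$ is close to $-\nu(x_0)$, along which hypothesis \eqref{eq5.30} forces the limiting diffusion of the Bellman--Isaacs form \eqref{eq5.10} to be uniformly nondegenerate. Precisely, writing $a_{\gl\mu}:=\tilde\gs_{\gl\mu}^\T\tilde\gs_{\gl\mu}$ and using the equi-Lipschitz continuity of $\{\tilde\gs_{\gl\mu}\}$ (valid by \eqref{eq5.6} and \eqref{eq5.9}), for $r_\theta$ small enough one has $D\psi(x)^\T a_{\gl\mu}(x)D\psi(x)\ge\tfrac12\kappa^2$ for every $\gl\in L$, $\mu\in M$ and $x\in\ol\gO\cap B_{r_\theta}(x_0)$, where $\kappa:=\inf_{\gl,\mu}|\nu(x_0)\bmat I_N & -\gamma_0^\T(x_0)\emat\gs_{\gl\mu}^\T(x_0,0)|>0$.

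Requiring moreover $r_\theta\le\eta$, so that $\psi+\eta\le 2\eta$ on $\ol\gO\cap B_{r_\theta}(x_0)$, a straightforward computation gives there $-\tr(a_{\gl\mu}D^2\phi)\ge c_1 K\eta^{-3/2}-c_2 K\eta^{-1/2}-c_3\gL$, with $c_1>0$ depending only on $\kappa$ and $c_2,c_3$ on the constants in \eqref{eq5.6}; combined with $|\tilde b_{\gl\mu}|\le C$, the bound $|D\phi|\le\tfrac{K}{2\sqrt\eta}+2\gL r_\theta$, and the fact that the contact value $u(\hat x)$ below lies in $[\,u(x_0),u(x_0)+\theta\,]$, this yields $G(D^2\phi(x),D\phi(x),u(\hat x),x)>0$ on $\ol\gO\cap B_{r_\theta}(x_0)$, once we fix, in this order: $\eta$ small (depending only on $\kappa$ and the structural constants, so that $c_1\eta^{-1}$ beats $c_2$ and the first-order constant), then $r_\theta$ small (so that, besides the above, $u<u(x_0)+\theta$ and $\beta(\cdot,0)<u(x_0)-\theta$ on $\ol{B_{r_\theta}(x_0)}$, $\psi$ is smooth there, and the nondegeneracy estimate holds), then $\gL=\theta/r_\theta^2$, then $K$ large. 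Since $\psi\ge 0$ and the last two terms of $\phi$ are nonnegative, $\phi\ge u(x_0)>\beta(x,0)$ on $\ol\gO\cap B_{r_\theta}(x_0)$; and on $\pl B_{r_\theta}(x_0)\cap\ol\gO$ one has $\phi(x)-\phi(x_0)\ge\gL r_\theta^2=\theta>u(x)-u(x_0)$, so the maximum of $u-\phi$ over the compact set $\ol\gO\cap\ol{B_{r_\theta}(x_0)}$---which is at least $(u-\phi)(x_0)=0$---is attained at some $\hat x$ with $|\hat x-x_0|<r_\theta$, whence $u(\hat x)\ge\phi(\hat x)\ge u(x_0)$. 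If $\hat x\in\gO$, the subsolution property of $u$ forces $G(D^2\phi(\hat x),D\phi(\hat x),u(\hat x),\hat x)\le 0$, contradicting the lower bound; if $\hat x\in\pl\gO$, it forces $G(D^2\phi(\hat x),D\phi(\hat x),u(\hat x),\hat x)\le 0$ or $u(\hat x)\le\beta(\hat x,0)$, the former again contradicting the lower bound and the latter contradicting $u(\hat x)\ge\phi(\hat x)>\beta(\hat x,0)$. Hence $u\le\beta(\cdot,0)$ on $\pl\gO$; the supersolution case is identical up to sign, using $\phi(x)=u(x_0)-K(\sqrt{\psi(x)+\eta}-\sqrt\eta)-\gL|x-x_0|^2$, for which $D^2\phi$ is very large and positive along $D\psi$, so that $G<0$ near $x_0$.

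I expect the main obstacle to be the interdependence and ordering of the parameters $\theta,\eta,r_\theta,\gL,K$: the single beneficial term $c_1K\eta^{-3/2}$ must dominate simultaneously the competitor $c_2K\eta^{-1/2}$ (which forces $\eta$ to be chosen small first, using only $\kappa$ and the structural constants), the quadratic-penalty term $c_3\gL=c_3\theta/r_\theta^2$ (harmless, since $K$ is chosen last), and the first-order contributions of order $K\eta^{-1/2}+\gL r_\theta$. A secondary subtlety is that $\psi$ is only a barrier at the single point $x_0$ and may vanish along $\pl\gO$ near $x_0$; this is precisely why the quadratic $\gL|x-x_0|^2$ is needed, namely to force the contact point $\hat x$ strictly into $B_{r_\theta}(x_0)$, where the relaxed (viscosity) form of the Dirichlet condition can be excluded.
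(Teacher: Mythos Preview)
Your argument is correct and follows the same strategy as the paper: use the exterior sphere from \eqref{eq5.4} to build a barrier whose Hessian is large and negative in the normal direction, exploit \eqref{eq5.30} to make the principal part of $G$ blow up, and thereby rule out both alternatives in the viscosity Dirichlet condition. The only real difference is the choice of barrier: the paper uses the one-parameter exponential family $\psi_t(x)=b+C_t\bigl(1-e^{t(r_0^2-|x-y|^2)}\bigr)$ with $y=x_0+r_0\nu(x_0)$ and sends $t\to\infty$, which self-localizes (since $\psi_t\to+\infty$ off the enlarged ball $B_{r_1}(y)$) and so avoids your separate quadratic penalty $\Lambda|x-x_0|^2$; you instead take the square-root barrier $K(\sqrt{\psi+\eta}-\sqrt\eta)$, which forces you to manage the four parameters $\eta,r_\theta,\Lambda,K$ in the order you describe. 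Both mechanisms produce the same rank-one blow-up $-c\,\nu\otimes\nu$ in $D^2\phi$, so the proofs are interchangeable.
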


Notice that $\nu(x)\bmat I_N & -\gamma_0^\T(x)\emat \gs_{\gl,\mu}^\T(x,0)$ 
is a $1\tim k$ matrix, i.e., a $k$-vector. Recall that we are always assuming that $\beta\in C(\pl\gO \tim[-1,1],\R)$. 

\begin{remark} Condition \eqref{eq5.30} reads 
$$\inf_{(\gl,\mu)\in L\tim M}\tr[\tilde \gs_{\gl\mu}^\T\tilde\gs_{\gl\mu}(x)\nu(x)\otimes\nu(x)]>0
\quad \FORALL x\in\pl\gO.$$ (See also \eqref{eq5.32} below.) This condition can be thought of 
as a strict ellipticity of the operator $G$ in the normal direction on the boundary $\pl\gO$. 
\end{remark}

\bproof The conclusion of the lemma states precisely that if $u$ is an upper semicontinuous subsolution (resp., lower semicontinuous supersolution) to (D$_0$),  then $u(x)\leq \gb(x,0)$ (resp., $u(x)\geq \gb(x,0)$) for any $x\in\pl\gO$.  We give only the proof in the case when 
$u$ is such a subsolution to (D$_0$), 
the other case is treated similarly. 

We fix any $z\in\pl\gO$.  Thanks to \eqref{eq5.4} (the exterior sphere condition), by reselecting 
$r_0>0$ small enough if necessary, we have 
\beq\label{eq5.31}
\ol{B_{r_0}(z+r_0\nu(z))}\cap\ol{\gO}=\{z\}. 
\eeq
Due to \eqref{eq5.30}, we infer by continuity that for some $\gd>0$ and all 
$(\xi, x)\in B_\gd(\nu(z))\tim (B_\gd(z)\stm\ol\gO)$, 
\[ 
\inf_{(\gl,\mu)\in L\tim M}|\xi \bmat I_N&-\gamma_0^\T(x)\emat\gs_{\gl\mu}(x,0) |>\gd,
\] 
which reads 
\beq
\label{eq5.32}
\inf_{(\gl,\mu)\in L\tim M}\tr [\tilde\gs_{\gl \mu}^\T\tilde\gs_{\gl \mu}(x)\xi\otimes \xi]>\gd^2 . 
\eeq
For the brevity of notation, we set $y:=z+r_0\nu(z)$.
Choose $\ep\in(0,\gd)$ so that 
\[
\fr{y-x}{r_0}\in B_\gd(\nu(z)) \ \ \FOR x\in B_\ep(z).
\]
By \eqref{eq5.31}, we can choose $r_1>r_0$, close enough to $r_0$, 
so that 
\[
\ol{B_{r_1}(y)}\cap\ol{\gO}\subset B_\ep(z). 
\]
Under this choice of $r_1$, we have by \eqref{eq5.32}, 
\beq\label{eq5.33} 
\inf_{(\gl,\mu)\in L\tim M}\tr[\tilde\gs_{\gl \mu}\tilde\gs_{\gl \mu}(x)(x-y)\otimes (x-y)]>r_0^2\gd^2 
\ \ \FORALL x\in B_{r_1}(y)\cap\ol \gO. 
\eeq

Now, we fix any $b>\beta(z,0)$ and will show that $u(z)\leq b$, which is enough  
to complete the proof. 
We may assume, by reselecting $\ep>0$ small enough if necessary,  that 
$b>\beta(x,0)$ for all $x\in B_\ep(z)\cap \pl\gO$.   For $t>0$, we define the smooth function 
$\psi_t$ in $\R^N$ by
\[
\psi_t(x)=b+C_t(1-e^{t(r_0^2-|x-y|^2)}), \quad\text{ with } C_t:=e^{t(r_1^2-r_0^2)}. 
\]
Note that, by \eqref{eq5.31},
\[
\min_{\ol\gO}\psi_t=\psi_t(z)=b,
\]
and 
\[
\min_{\ol\gO\stm B_{r_1}(y)}\psi_t =b+C_t(1-e^{t(r_0^2-r_1^2)}) \to +\infty \ \ \text{ as } t\to +\infty.
\]
Since $u$ is bounded above in $\ol\gO$, this last observation allows us to choose $T>0$ so that for $t\geq T$,
\beq\label{eq5.34}
\max_{\ol\gO}u<\min_{\ol\gO\stm B_{r_1}(y)}\psi_t. 
\eeq
Henceforth, we consider only those $\psi_t$ with $t\geq T$. 

We consider the case where $u\leq\psi_t$ in $\ol\gO$ for some $t\geq T$.  Then we have $u(z)\leq\psi_t(z)=b$, and we finish the proof. 
This is only the case we have. Indeed, otherwise, we get a contradiction.  
To check this, suppose 
\[
\max_{\ol\gO}(u-\psi_t)>0 \ \ \FORALL t\geq T.
\]  
This and \eqref{eq5.34} imply 
\[
\max_{\ol\gO}(u-\psi_t)=\max_{\ol{B_{r_1}(y)}\cap \ol\gO}(u-\psi_t)>0. 
\]
Let $x_t\in \ol{B_{r_1}(y)}\cap \ol\gO$ be a maximum point of $u-\psi_t$. 
Since $x_t\in B_\ep(z)$, if $x_t\in\pl\gO$, then   
\[
u(x_t)>\psi_t(x_t)\geq b>\beta(x_t,0).
\]
Hence, the subsolution property of $u$ yields
\beq\label{eq5.35}
0\geq G(D^2\psi_t(x_t),D\psi_t(x_t), u(x_t),x_t)
\geq G(D^2\psi_t(x_t),D\psi_t(x_t),b,x_t).
\eeq

\noindent Computing, for any $x\in\R^N$, 
\[\bald
D\psi_t(x)&=2tC_t e^{t(r_0^2 -|x-y|^2)}(x-y),
\\ D^2\psi_t(x)&=C_t e^{t(r_0^2-|x-y|^2)} (2t I_N-4t^2(x-y)\otimes (x-y)),
\eald\]
and moreover,  using \eqref{eq5.33} and that $|x_t-y|\leq r_1$:
\[\bald
-\tr[\tilde\gs_{\gl \mu}^\T&\tilde\gs_{\gl \mu}(x_t) D^2\psi_t(x_t)] -\tilde b_{\gl \mu}(x_t)\cdot D\psi_t(x_t)
\\&= tC_t e^{t(r_0^2-|x_t-y|^2)} \big(-2\tr[\tilde\gs_{\gl \mu}^\T\tilde \gs_{\gl\mu}(x_t) ]
+4t \tr[\tilde\gs_{\gl\mu}^\T\tilde\gs_{\gl\mu}(x_t)(x_t-y)\otimes(x_t-y)] 
\\&\quad -2\tilde b_{\gl\mu}(x_t)\cdot (x_t-y)
\big)
\\&\geq  tC_t e^{t(r_0^2-|x_t-y|^2)} \big(-2 B 
+4 r_0^2\gd^2 t
-2 r_1 B \big), 
\eald
\]
where $B>0$ is a constant chosen so that
\[
\sup_{(x,\gl,\mu)\in\ol\gO\tim L\tim M}\max\{|\tilde b_{\gl\mu}(x)|,\,\tr[\tilde \gs_{\gl\mu}^\T \tilde\gs_{\gl\mu}(x)]\}\leq B. 
\]
Observe that for $t$ sufficiently large, $\ 4r_0^2\gd^2 t-2(1+r_1)B\geq 1\ $ and 
\[\bald
-\tr[\tilde\gs_{\gl \mu}^\T&\tilde\gs_{\gl \mu}(x_t) D^2\psi_t(x_t)] -\tilde b_{\gl \mu}(x_t)\cdot D\psi_t(x_t)
\\&\geq  tC_t e^{t(r_0^2-|x_t-y|^2)}\geq t C_t e^{t(r_0^2-r_1^2)}=t. 
\eald
\]
This implies that, as $t\to +\infty$,
\[
G(D^2\psi_t(x_t),D\psi_t(x_t),b,x_t)\to +\infty,
\]
which contradicts \eqref{eq5.35}. 
\eproof

\begin{theorem} Assume (H5), {\eqref{eq5.4},} and \eqref{eq5.30}. 
For $\ep>0$ sufficiently small, let $u^\ep$ be a viscosity solution to \dirichlet.  
Then  $u^\ep$ converges to $u^0$ uniformly on $\ol\gO$: 
\[
\lim_{\ep\to 0^+} \sup_{(x,y)\in\ol{\gO_\ep}}|u^\ep(x,y)-u^0(x)|=0. 
\]

\end{theorem}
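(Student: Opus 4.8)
The plan is to derive the statement from Corollary~\ref{cor1} in exactly the way Theorem~\ref{thmObliNuu} is derived, the only item requiring verification being the comparison principle (H4) for the limiting Dirichlet problem \dirichletzero, i.e.\ \eqref{limeq.lbc.d}. First I would record that (H5) already contains (H2), \eqref{eq1.2+2}, \eqref{eq1.3+0}, \eqref{eq1.4}, \eqref{eq1.5}, and \eqref{eq1.6}, and that, as observed at the beginning of Section~5, an operator $F$ of the Bellman--Isaacs form \eqref{eq5.5} satisfying \eqref{eq5.6}--\eqref{eq5.7} fulfils (H1). Consequently all the hypotheses of Proposition~\ref{exisnew} are in force, so for every sufficiently small $\ep>0$ there is a viscosity solution $u^\ep$ to \dirichlet, and $\sup_{\ol{\gO_\ep}}|u^\ep|\le C_0$ with $C_0$ independent of $\ep$.

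Next I would verify the structural conditions on $G$. Using the factorization of the matrix $A(X,x)$ recorded in \eqref{eq1.8+2} and the auxiliary coefficients $\tilde\gs_{\gl\mu},\tilde b_{\gl\mu},\tilde f_{\gl\mu}$ introduced in Section~5, the operator $G$ has the Bellman--Isaacs representation \eqref{eq5.10}; under \eqref{eq5.6} and \eqref{eq5.9} the family $\{(\tilde\gs_{\gl\mu},\tilde b_{\gl\mu})\}$ is equi-Lipschitz, $\{(c_{\gl\mu},\tilde f_{\gl\mu})\}$ is equicontinuous, and the whole family is uniformly bounded on $\ol\gO$; together with \eqref{eq5.7} this gives \eqref{eq5.1}, \eqref{eq5.2}, and \eqref{eq5.3}. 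In particular \eqref{eq5.4}, \eqref{eq5.5}, \eqref{eq5.6}, \eqref{eq5.7}, and \eqref{eq5.9} all hold, so Lemma~\ref{lem5} applies: thanks to the strict normal-ellipticity hypothesis \eqref{eq5.30}, the Dirichlet condition in \eqref{limeq.lbc.d} is attained in the classical pointwise sense by any viscosity sub- or supersolution.

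With that in hand, (H4) follows from the standard comparison principle for the Dirichlet problem in the viscosity framework. If $v$ and $w$ are a viscosity sub- and supersolution of \dirichletzero, then by Lemma~\ref{lem5} one has $v\le\beta(\cdot,0)\le w$ on $\pl\gO$, and conditions \eqref{eq5.1}--\eqref{eq5.2} allow one to run the usual doubling-of-variables argument of \cite{CIL} to conclude $v\le w$ in $\gO$, hence on $\ol\gO$. Thus every hypothesis of Corollary~\ref{cor1} is met, and the corollary yields a unique viscosity solution $u^0$ of \dirichletzero together with $\lim_{\ep\to0^+}\sup_{(x,y)\in\ol{\gO_\ep}}|u^\ep(x,y)-u^0(x)|=0$, which is the asserted conclusion.

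The one point deserving care, and hence the main obstacle, is that the boundary condition in \eqref{limeq.lbc.d} is a priori only a \emph{viscosity} Dirichlet condition (the equation $G=0$ might a priori be allowed to hold at points of $\pl\gO$ in place of $u=\beta(\cdot,0)$); it is precisely the strict ellipticity in the normal direction \eqref{eq5.30}, exploited through the barrier construction in Lemma~\ref{lem5}, that upgrades it to the classical pointwise condition and thereby lets the standard comparison argument, and with it the entire convergence scheme, go through. Everything else is a routine assembly of results already established in the previous sections.
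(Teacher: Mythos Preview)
Your proof is correct and follows essentially the same route as the paper: verify that under (H5) the operator $G$ satisfies \eqref{eq5.1}--\eqref{eq5.2}, invoke Lemma~\ref{lem5} to upgrade the viscosity Dirichlet condition in \eqref{limeq.lbc.d} to the classical one, deduce (H4) from the standard comparison principle in \cite{CIL}, and then apply Corollary~\ref{cor1}. Your write-up is more explicit in checking the hypotheses, but the logic is identical.
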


\bproof

Since $G$ satisfies \eqref{eq5.1} and \eqref{eq5.2} and the Dirichlet boundary condition 
in \eqref{limeq.lbc.d} can be considered in the classical sense thanks to Lemma \ref{lem5}, it is well-known (see, for instance, \cite{CIL})
that the comparison principle (H4) holds for {(D$_0$).} 
So we just need to apply Corollary~\ref{cor1}. 
\eproof

\end{document}